								\numberwithin{equation}{section}
								\newtheorem{theorem}{Theorem}[section]
								\newtheorem{lemma}[theorem]{Lemma}
								\newtheorem{corollary}[theorem]{Corollary}
								\newtheorem{proposition}[theorem]{Proposition}
								\newtheorem{definition}[theorem]{Definition}
\title{An Erd{\" o}s-Turan Inequality For Compact Simply-Connected Semisimple Lie Groups}
\author{Zev Rosengarten}
\begin{document}

\maketitle

\begin{abstract}

The classical Erd{\" o}s-Turan Inequality bounds how far a sequence of points in the circle is from being equidistributed in terms of its exponential moments. We prove an analogous inequality for all compact simply-connected semisimple Lie groups, bounding how far a sequence is from being equidistributed in the conjugacy classes of the group in terms of the moments of irreducible characters.

\end{abstract}

\section{Introduction}

A very important and interesting notion in the study of sequences on measure spaces is that of equidistribution. For the circle group $S^1$ this is defined as follows.
\begin{definition}
A sequence of points $\{a_i\}_{i = 1}^\infty$ in $S^1$ is said to be equidistributed if for any interval $I \subset S^1$, 
\begin{align*}
\lim_{N\rightarrow \infty} \frac{ |\{1\leq i \leq N: a_i \in I\}|}{N} = m(I),
\end{align*}
where $m$ is mass-one Haar measure on $S^1$.
\end{definition}
That is, a sequence is equidistributed if its empirical distribution tends toward the distribution of a random sequence.
The first major result in the study of equidistribution of general sequences was proved by Hermann Weyl. The Weyl criterion says that a sequence of points in $S^1$ is equidistributed if and only if its exponential moments all go to $0$. Weyl used this criterion to prove equidistribution for various important sequences. Let $e(x)$ be the usual exponential function on $S^1$, so if we write $S^1$ as $\mathbb{R}/\mathbb{Z}$, then $e(x) := e^{2\pi i x}$. Then Weyl's criterion is the following.

\begin{theorem}[Weyl Criterion]
A sequence of points $\{a_i\}_{i = 1}^\infty$ in $S^1$ is equidistributed if and only if for every $0 \neq k \in \mathbb{Z}$,
\begin{align*}
\lim_{N \rightarrow \infty} \frac{1}{N} \sum_{i = 1}^N e(ka_i) = 0.
\end{align*}
\end{theorem}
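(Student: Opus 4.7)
The plan is to reduce both directions to the density of trigonometric polynomials in $C(S^1)$. First I would establish an intermediate characterization: a sequence $\{a_i\}$ in $S^1$ is equidistributed if and only if
\begin{align*}
\lim_{N\to\infty} \frac{1}{N}\sum_{i=1}^N f(a_i) = \int_{S^1} f\, dm
\end{align*}
for every continuous $f: S^1 \to \mathbb{C}$. The implication from this functional statement back to equidistribution is the subtle direction: one sandwiches the indicator $1_I$ of an interval between continuous functions $f^- \leq 1_I \leq f^+$ with $\int (f^+ - f^-)\, dm < \epsilon$, and chains
\begin{align*}
\int f^-\, dm \leq \liminf_{N\to\infty} \frac{|\{1\leq i\leq N : a_i\in I\}|}{N} \leq \limsup_{N\to\infty} \frac{|\{1\leq i\leq N : a_i\in I\}|}{N} \leq \int f^+\, dm.
\end{align*}
The reverse implication is elementary since any continuous $f$ is a uniform limit of step functions, each of which is handled by the definition of equidistribution directly.

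Given this reformulation, the forward direction of Weyl's criterion is immediate: apply the functional form to $f(x) = e(kx)$ and note that $\int_{S^1} e(kx)\, dm = 0$ for $k \neq 0$.

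For the converse, assume that all the exponential moments vanish. By linearity the same holds for any trigonometric polynomial $P(x) = \sum_{|k| \leq M} c_k e(kx)$, with the averages $\frac{1}{N}\sum_{i=1}^N P(a_i)$ converging to $c_0 = \int_{S^1} P\, dm$. By the Stone--Weierstrass theorem (or explicitly, Fej\'er kernels), trigonometric polynomials are dense in $C(S^1)$ under the sup norm. Given $\epsilon > 0$ and continuous $f$, choose such a $P$ with $\|f - P\|_\infty < \epsilon$; a three-term triangle inequality then yields $\bigl|\frac{1}{N}\sum_{i=1}^N f(a_i) - \int_{S^1} f\, dm\bigr| < 3\epsilon$ for $N$ sufficiently large, verifying the functional characterization and hence equidistribution.

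The main obstacle is the sandwiching argument for indicator functions of intervals. It requires care with the endpoints of $I$ and with an explicit construction of continuous $f^\pm$ bracketing $1_I$ to within $L^1$ error $\epsilon$ (piecewise linear bumps suffice). The remaining steps are linearity and the standard uniform approximation via Fej\'er's theorem, both of which are routine once the functional characterization is in place.
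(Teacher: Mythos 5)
Your proposal is correct and follows exactly the route the paper indicates: it reformulates equidistribution via continuous test functions and then invokes the density of trigonometric polynomials in $C(S^1)$, with the sandwiching of interval indicators by continuous functions filling in the one nontrivial step that the paper's sketch leaves implicit. No issues.
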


Weyl proved this result by reformulating equidistribution in terms of functions, as follows.

\begin{definition}
A sequence of points $\{a_i\}$ in $S^1$ is said to be equidistributed if for every continuous function $f: S^1 \rightarrow \mathbb{C}$,
\begin{align*}
\lim_{N \rightarrow \infty} \frac{1}{N} \sum_{i = 1}^N f(a_i) = \int_{S^1} fdm.
\end{align*}
\end{definition}

The Weyl Criterion is then proved by noting that the span of the exponential functions $e(kx)$, $k \in \mathbb{Z}$, is dense in the space $C(S^1)$ of continuous functions from $S^1$ to $\mathbb{C}$. Now the classical Erd{\" o}s-Turan Inequality gives a quantitative version of the Weyl Criterion, by bounding how far a sequence is from equidistribution in terms of its exponential moments. This measure of how far a sequence is from equidistribution is given by the discrepancy.

\begin{definition}
Let $a = \{a_i\}_{i = 1}^N$ be a sequence of points in $S^1$. We define the discrepancy of $a$, denoted $D(a)$, by
\begin{align*}
D(a) := \sup_{I \subset S^1}  \left| \frac{1}{N}|\{1 \leq i\leq N: a_i \in I\}| - m(I) \right|,
\end{align*}
where the supremum is over all intervals $I \subset S^1$.
\end{definition}
\noindent The Erd{\" o}s-Turan Inequality then asserts the following.

\begin{theorem}[Erd{\" o}s-Turan Inequality]
Let $a = \{a_i\}_{i = 1}^N$ be a sequence of points in $S^1$. Then for some absolute constant $C$, and every positive integer $k$,
\begin{align*}
D(a) \leq  C \left(\frac{1}{k} + \sum_{0 < |n| \leq k} |\frac{1}{N}\sum_{i = 1}^N e(nx)| \right).
\end{align*}
\end{theorem}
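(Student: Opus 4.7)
The plan is to sandwich the indicator function $\mathbf{1}_I$ of an arbitrary interval between trigonometric polynomials of degree at most $k$, then expand these polynomials in Fourier series and apply the hypothesis bound to each exponential moment individually.

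\textbf{Step 1 (Beurling-Selberg majorants).} For each interval $I \subset S^1$, I would produce trigonometric polynomials $P_I^+$ and $P_I^-$ of degree at most $k$ satisfying $P_I^-(x) \leq \mathbf{1}_I(x) \leq P_I^+(x)$ for all $x \in S^1$, with both $\int_{S^1}(P_I^+ - \mathbf{1}_I)\,dm$ and $\int_{S^1}(\mathbf{1}_I - P_I^-)\,dm$ bounded by $C_1/k$, and with the Fourier coefficients $|\widehat{P_I^\pm}(n)|$ bounded by an absolute constant $C_2$ for every $0 < |n| \leq k$. These are the classical Beurling-Selberg majorants and minorants of an interval; they are obtained by periodizing an entire function of exponential type $2\pi k$ that majorizes the signum function and differs from it by an $L^1(\mathbb{R})$-function of controlled norm.

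\textbf{Step 2 (Fourier expansion and averaging).} Writing $P_I^+(x) = \widehat{P_I^+}(0) + \sum_{0 < |n| \leq k} \widehat{P_I^+}(n)\,e(nx)$, and noting that $\widehat{P_I^+}(0) = \int_{S^1} P_I^+\,dm \leq m(I) + C_1/k$, I evaluate at the $a_i$ and average to obtain
\begin{align*}
\frac{|\{1 \leq i \leq N : a_i \in I\}|}{N} - m(I) &\leq \frac{1}{N} \sum_{i=1}^N P_I^+(a_i) - m(I) \\
&\leq \frac{C_1}{k} + \sum_{0 < |n| \leq k} |\widehat{P_I^+}(n)| \cdot \Big| \frac{1}{N} \sum_{i=1}^N e(na_i) \Big| \\
&\leq C \left( \frac{1}{k} + \sum_{0 < |n| \leq k} \Big| \frac{1}{N} \sum_{i=1}^N e(na_i) \Big| \right),
\end{align*}
where $C = \max(C_1, C_2)$. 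The symmetric argument with the minorant $P_I^-$ produces the same bound for $m(I) - \frac{1}{N}|\{i \leq N : a_i \in I\}|$. Taking the supremum over all intervals $I$ yields the desired estimate for $D(a)$.

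\textbf{Main obstacle.} The analytic heart of the argument is Step 1: one needs simultaneous pointwise domination of $\mathbf{1}_I$, $L^1$-closeness of order $1/k$, and uniform boundedness of individual Fourier coefficients up to degree $k$. Producing a single polynomial that achieves all three bounds is the Beurling-Selberg interpolation problem; once such majorants are in hand, Step 2 is essentially the triangle inequality applied to a finite Fourier expansion. In view of the paper's goal of generalizing to compact simply-connected semisimple Lie groups, this suggests that the main difficulty in the general case will lie not in the averaging step but in constructing the analogous majorants and minorants of indicator functions of conjugacy-invariant regions by linear combinations of irreducible characters of bounded highest weight.
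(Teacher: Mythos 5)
Your proposal is a correct proof of the classical inequality: the Beurling--Selberg--Vaaler majorants and minorants of an interval with degree at most $k$, $L^1$-error $O(1/k)$, and uniformly bounded Fourier coefficients do exist, and your Step 2 is then the triangle inequality applied correctly in both directions, so once Step 1 is cited the argument is complete. However, this is not the route the paper takes (the paper states the classical theorem without proof, but its method for the Lie-group generalization, following Niederreiter, is the relevant comparison). Instead of one-sided trigonometric approximations to indicator functions, the paper (a) proves an exact integration-by-parts identity (Lemma \ref{integralopsgeneral} and Corollary \ref{corollary}) expressing $\frac{1}{N}\sum_i h(a_i) - \int h\,d\mu$ as an integral of the local discrepancy function $R$ against mixed partial derivatives of $h$; (b) chooses $h$ to be an antiderivative of a translate of the nonnegative Chebyshev kernel $f_k = (T_k(|x|)/|x|)^{2(1+\lfloor r/2\rfloor)}$, centered at a point $v$ near which $|R| \geq \frac{1}{2}D^*$ on an entire ball of radius $\asymp D^*$ (exploiting the Lipschitz continuity of $x \mapsto \mu(I_x)$); and (c) plays the resulting lower bound $\gg D^* \int f_k$ off against the upper bound obtained by expanding $h$ in irreducible characters (Lemma \ref{kernel} and Lemma \ref{coefficientbound}). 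The two approaches buy different things: yours gives sharper constants on $S^1$ and is conceptually cleaner there, but the extremal one-sided approximation problem becomes genuinely difficult in several variables and with respect to the singular pushforward measure $\mu$ rather than Lebesgue measure. So your closing prediction --- that the generalization will hinge on constructing majorants of conjugacy-invariant regions by characters --- identifies precisely the obstacle the paper is designed to avoid: by localizing at a near-extremal point of $R$ and using a one-signed concentrated kernel, it never needs a one-sided approximation of an indicator function at all, at the price of worse constants and the inclusion-exclusion bookkeeping over faces of the box.
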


There is also a  generalization of this inequality to any torus $(S^1)^m$ (the Erd{\"o}s-Turan-Koksma Inequality; see [D-T], page 15, Theorem 1.21).

In this paper we prove an analogous inequality for every compact simply-connected semisimple Lie group. In the same way that the Erd{\"o}s-Turan Inequality is a quantitative version of the Weyl Criterion, our result may be thought of as a quantitative version of the Peter-Weyl theorem, since the generalization of the Weyl Criterion to a compact Lie group $G$ follows from the Peter-Weyl Theorem, which tells us that the characters of representations of $G$ span a dense subspace of $C^{\mbox{class}}(G)$, the space of continuous complex-valued class functions on $G$. Our result bounds how far a sequence is from being equidistributed in the set of conjugacy classes in terms of the moments of irreducible characters.  Our techniques generalize those of Niederreiter [N], who essentially proves such an inequality for the group $SU(2)$ ([N], Lemma 3), although he doesn't phrase his results in this way.

The structure of this paper is as follows. In section 2 we introduce and state our main theorem. In section 3 we study the pushforward by fundamental characters of Haar measure on a compact simply-connected semisimple Lie group. In section 4  we prove our main theorem.

\section{The Main Theorem} 
In this section we introduce and state our main theorem.
Let $G$ be a compact simply-connected semisimple Lie group. Since $G$ is simply-connected, its representations are in one-to-one correspondence with the representations of its Lie algebra, via the map that takes a representation of $G$ to its differential at the identity. It then follows from the general representation theory of semisimple Lie algebras that 
  the (complex) representation ring of $G$ is a polynomial ring in irreducible representations $\rho_1$, ..., $\rho_{r_1}$, $\gamma_1$,..., $\gamma_{r_2}$, $\gamma_1^*$,..., $\gamma_{r_2}^*$(If $\beta$ is a representation, then $\beta^*$ denotes the dual representation.), where the $\rho_i$ are self-dual (equivalently, their characters are real-valued), the $\gamma_i$ are not, and $r_1 + 2r_2 = r$, where $r = rank(G)$ is the dimension of a maximal torus in $G$. These representations are called fundamental representations of $G$. We let $\chi_1$, ..., $\chi_{r_1}$, $\Gamma_1$,..., $\Gamma_{r_2}$, $\bar{\Gamma}_1$,..., $\bar{\Gamma}_{r_2}$ denote their characters. We use these characters to map elements of $G$ to $\mathbb{R}^{r_1} \times \mathbb{C}^{r_2} = \mathbb{R}^r$ (via the identification $\mathbb{C} = \mathbb{R}^2$) by letting $\chi_j$ map to $x_j$ and $\Gamma_j$ to $z_j = \alpha_j + i\beta_j$, and we call this map $P$. That is, we define $P: G\rightarrow \mathbb{R}^{r_1} \times \mathbb{C}^{r_2} = \mathbb{R}^r$ by
 \begin{align*}
 P(g) := (\chi_1(g), ..., \chi_{r_1}(g), \Gamma_1(g), ..., \Gamma_{r_2}(g) ).
 \end{align*} 
We now define the degree of a tensor power of fundamental representations. Given the fundamental representations $\delta_1$, ..., $\delta_r$ of $G$, we call $\delta_1^{\otimes m_1} \otimes ... \otimes \delta_r^{\otimes m_r}$ a tensor power of the fundamental representations of degree $\sum_{i = 1}^r {m_i}$. Furthermore, this tensor power decomposes uniquely as a direct sum of irreducible representations, and we say that an irreducible representation appears in this tensor power if it appears as a summand in this decomposition. An equivalent way of formulating this is as follows. Let $\alpha_1$, ..., $\alpha_r$ denote the fundamental weights of $G$ (that is, the highest weights of the fundamental representations of $G$). Then the irreducible representation with highest weight $\lambda = \sum_{i = 1}^r m_i \alpha_i$ appears in a tensor power of the fundamental representations of degree $\leq d$ if and only if $\sum_{i = 1}^r m_i \leq d$. Now as a function on $P(G)$ any irreducible character is a polynomial in the $x_j$, $\alpha_j$, $\beta_j$. Furthermore, any polynomial in these variables of degree $d$ is in the $\mathbb{C}$-span of the (pushforwards of the) irreducible characters appearing in the tensor powers of the fundamental representations of degree at most $d$, since any polynomial in these variables is a polynomial in the $x_j$, $z_j$, $\bar{z_j}$ of the same degree via the relations $\alpha_j = (z_j + \bar{z_j})/2$ and $\beta_j = (z_j - \bar{z_j})/2i$, and a monomial in these variables is just the character of the corresponding tensor power of the fundamental representations. Now the image of $G$ under this map is a compact set, contained in the box $[-M, M]^r\subset \mathbb{R}^r$, where $M := \max_{\delta} dim(\delta)$, the maximum being taken over the fundamental representations $\delta$ of $G$. We denote the pushforward of Haar measure to this box by $\mu$. That is,
 \begin{align*}
 \mu := P_*(dg),
 \end{align*}
 where $dg$ is Haar measure on $G$, normalized to have total mass one. Thus, the measure $\mu$ on $\mathbb{R}^r$ is defined by $\mu(A) = dg(P^{-1}(A))$ for any Borel set $A \subset \mathbb{R}^r$.
 Then we define the discrepancy of a sequence in $G$ by the usual definition of discrepancy for the pushforward sequence with respect to $\mu$. That is, given a sequence $g_1$, ..., $g_N \in G$, if its image in $[-M, M]^r$ under $P$ is $a = \{a_i = P(g_i)\}_{i = 1}^N$, then we define the discrepancy $D(g)$ of $\{g_i\}_{i = 1}^N$ by
\[
D(g) = D(a) := \displaystyle \sup_{I\subset [-M, M]^r} \left|\mu(I) - \frac{1}{N}|\{i: a_i\in I\}| \right|,
\]
where the supremum is over all boxes $I\subset [-M, M]^r$ (A box in $\mathbb{R}^r$ is defined to be a product of intervals.). To understand this discrepancy, it suffices to understand the so-called star-discrepancy $D^*$ defined by 
\[
D^*(g) = D^*(a) : = \displaystyle \sup_{x\in [-M, M]^r} \left|\mu(I_x) - \frac{1}{N}|\{i: a_i\in I_x\}|  \right|,
\]
where for $x = (x_i)\in [-M, M]^r$, $I_x: = \prod_{i = 1}^r {[-M, x_i]}$. Indeed, we have the inequalities
\begin{align}
D^*(a) \leq D(a) \leq 2^rD^*(a). \label{eq:ineq3}
\end{align}
The first inequality is obvious. To see the second, consider a box $\prod_{i = 1}^r {[a_i, b_i]}\subset [-M, M]^r$. For each subset $J\subset \{1,..., r\}$, define $x_J\in [-M, M]^r$ by 
\[
(x_J)_i := 
\begin{cases}
a_i & \mbox{if } i\in J \\
b_i & \mbox{if } i\notin J \\
\end{cases}.
\]
Then for any measure $\lambda$ on $[-M, M]^r$, we have (by inclusion-exclusion)
\[
\lambda(\prod_{i = 1}^r {[a_i, b_i]}) = \sum_{J\subset \{1,..., r\}} {(-1)^{|J|}\lambda(I_{x_J})},
\]
where $I_y$ is as defined previously. Applying this to the measures $\mu$ and $\frac{1}{N}\sum_{i = 1}^N {\delta_{a_i}}$ and subtracting yields the second inequality in (\ref{eq:ineq3}). Our proof will therefore utilize the star-discrepancy, since we lose little information. We now state our main result.
\\

\begin{theorem} 
\label{maintheorem}
Let $G$ be a compact simply-connected semisimple Lie group. Let $g = \{g_i\}_{i = 1}^N$ be a sequence in $G$. Then for every positive integer $k$,
\[
D(g) \leq C_G \left(\frac{1}{k} + \sum_{\chi} {\left|  \frac{1}{N}\sum_{i = 1}^N {\chi(g_i)} \right|}\right),
\]
where
\begin{align*}
C_G = \frac{600r^2(10M)^r 2^{r_2 + \frac{1}{2}(r + dim(G))}}{\pi^r},
\end{align*}
$r$ is the rank of $G$, $r_2$ is the number of non-self-dual pairs of dual fundamental representations, $M$ is the maximum dimension of any fundamental representation of $G$, and
where the sum is over the characters $\chi$  of the nontrivial irreducible complex representations appearing in all tensor powers of the fundamental representations of $G$ of degree at most $2(1 + \lfloor \frac{r}{2} \rfloor) (k - 1) + r$.
\end{theorem}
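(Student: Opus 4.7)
The plan is to adapt the classical Erd\"os--Turan argument, replacing trigonometric polynomials by ordinary polynomials in the character coordinates $x_j, \alpha_j, \beta_j$, which by the discussion preceding the theorem lie in the $\mathbb{C}$-span of characters of irreducible representations appearing in tensor powers of the fundamental representations of bounded degree. By (\ref{eq:ineq3}) it suffices, up to the factor $2^r$ absorbed into $C_G$, to bound the star-discrepancy, that is, to estimate
\[
\left| \frac{1}{N}\sum_{i=1}^N 1_{I_x}(a_i) - \mu(I_x) \right|
\]
for each $x \in [-M, M]^r$. For this I would sandwich $1_{I_x}$ between polynomial majorants and minorants $P^{\pm}$ in the coordinates $x_j, \alpha_j, \beta_j$, of total degree at most $d := 2(1 + \lfloor r/2 \rfloor)(k-1) + r$, arranged so that $\int(P^{+} - P^{-}) \, d\mu = O(1/k)$.

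The core construction is one-dimensional. For each coordinate projection of $\mu$ and each threshold $x_j$, one needs polynomial majorants and minorants $p_j^{\pm}$ of an indicator step function of degree $O(k)$, bounded in absolute value by $1$ on the support of the relevant marginal, with $\int (p_j^{+} - p_j^{-}) \, d\mu_j = O(1/k)$. This is a Jackson-style polynomial approximation of a step function, and its $L^1$ error bound requires density estimates for the marginals of $\mu$ under the fundamental characters --- exactly the content of Section 3. For the complex-valued coordinates $\Gamma_j = \alpha_j + i \beta_j$, one handles $\alpha_j, \beta_j$ either separately or via a single planar approximant; the isolated factor $2^{r_2}$ in $C_G$ suggests the latter. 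Forming the products $P^{\pm}(y) := \prod_j p_j^{\pm}(y_j)$ gives $P^{-} \leq 1_{I_x} \leq P^{+}$, and the telescoping identity
\[
\prod_j p_j^{+} - \prod_j p_j^{-} = \sum_{j=1}^r \Bigl(\prod_{i<j} p_i^{+}\Bigr) (p_j^{+} - p_j^{-}) \Bigl(\prod_{i>j} p_i^{-}\Bigr),
\]
integrated against $\mu$ and combined with the uniform marginal bounds and the $L^{\infty}$ bounds on the factors, yields $\int(P^{+} - P^{-}) \, d\mu = O(r/k)$.

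Next, since $P^{\pm}$ has total degree $\leq d$ in the coordinates $x_j, \alpha_j, \beta_j$, the discussion before the theorem expresses it as $P^{\pm} = c_0^{\pm} + \sum_\chi c_\chi^{\pm} \chi$, where the sum runs over nontrivial irreducible characters appearing in tensor powers of fundamental representations of degree $\leq d$. Since each such $\chi$ has mean zero against $\mu$, one obtains
\[
\frac{1}{N}\sum_i P^{+}(a_i) - \int P^{+} d\mu = \sum_\chi c_\chi^{+} \cdot \frac{1}{N}\sum_i \chi(g_i),
\]
and combining with $0 \leq \frac{1}{N}\sum_i P^{+}(a_i) - \mu(I_x) \leq \bigl(\frac{1}{N}\sum_i P^{+}(a_i) - \int P^{+} d\mu\bigr) + \int(P^{+} - 1_{I_x}) d\mu$ (and the analogous lower bound from $P^{-}$) yields the desired estimate, with the character-moment sum multiplied by $\max_\chi |c_\chi^{\pm}|$ and the $O(r/k)$ approximation error contributing the $1/k$ term.

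The hardest part will be making all constants explicit with the precise shape stated. The factor $(10M)^r / \pi^r$ should reflect both the uniform marginal density bounds from Section 3 and the $L^{\infty}$-to-coefficient bound for polynomials on $[-M, M]^r$, while the factor $2^{(r+\dim G)/2}$ most naturally comes from dimension bounds (via the Weyl dimension formula) for the irreducible constituents of tensor powers of the fundamental representations, which control both the number of summands and the sizes of the coefficients $|c_\chi^{\pm}|$. The somewhat unusual degree bound $2(1+\lfloor r/2\rfloor)(k-1) + r$ should emerge from tracking the coordinatewise degrees, with the pairing of real coordinates reducing the multiplier on $k-1$ from $r$ to roughly $r/2+1$.
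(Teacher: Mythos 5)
Your route is genuinely different from the paper's. The paper proves a Koksma--Hlawka-type integration-by-parts identity (Lemma \ref{integralopsgeneral}), tests the discrepancy function $R$ against a translated concentration kernel $f_k(x-v)=(T_k(|x-v|)/|x-v|)^{2(1+\lfloor r/2\rfloor)}$ built from Chebyshev polynomials, and localizes the translate $v$ so that $R$ has constant sign and size $\geq D^*/2$ on a ball of radius comparable to $D^*$, using the Lipschitz continuity of $x\mapsto\mu(I_x)$; the character moments enter by expanding a suitably normalized antiderivative of $f_k(x-v)$ in irreducible characters and bounding its coefficients via orthogonality. You propose instead the Erd\H{o}s--Tur\'an--Koksma majorant/minorant strategy, which is essentially how the toral case is handled in [D-T]. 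Both routes rest on the same input from Section 3 (Proposition \ref{pushforward}: $\mu=F\,dX$ with $F$ continuous and bounded); in your version it feeds the one-dimensional Jackson/Selberg $L^1$ estimates for the marginals, in the paper's it feeds the Lipschitz localization. One small miscalibration: the factor $2^{\frac12(r+\dim G)}$ does not come from the Weyl dimension formula but simply from $\sup|F|$, via $\bigl|\prod_{\alpha\in R^+}(e^{\alpha/2}-e^{-\alpha/2})\bigr|\leq 2^{|R^+|}=2^{(\dim G - r)/2}$.

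There is one concrete gap in your sketch: $P^-=\prod_j p_j^-$ is not a minorant of $1_{I_x}$. A nonzero polynomial minorant $p_j^-$ of a one-dimensional indicator must be strictly negative somewhere off the interval (it cannot be simultaneously $\geq 0$ and $\leq 1_{[-M,x_j]}$ there unless it vanishes identically), and at a point where two factors are negative the product $\prod_j p_j^-$ is positive while $1_{I_x}=0$. Only the majorant product survives, since $p_j^+\geq 1_{I_j}\geq 0$ makes $\prod_j p_j^+\geq \prod_j 1_{I_j}$. The standard repair is essentially your own telescoping identity applied in absolute value: from $|1_{I_j}-p_j^+|\leq p_j^+-p_j^-$ one gets $\bigl|1_{I_x}-\prod_j p_j^+\bigr|\leq\sum_j (p_j^+-p_j^-)\prod_{i\neq j}\max(\|p_i^+\|_\infty,1)$, which controls both $\frac1N\sum_i\bigl(1_{I_x}(a_i)-P^+(a_i)\bigr)$ and $\int(P^+-1_{I_x})\,d\mu$ by $O(r/k)$ plus character moments of the one-variable polynomials $p_j^+-p_j^-$ (each a polynomial of degree $O(k)$ in a single fundamental character, hence in the allowed span). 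With that fix and the degree bookkeeping you indicate, your argument should close, though the constant will not match $C_G$ exactly.
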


We remark that the constant $C_G$ can be improved. For example, the $10^r$ can be replaced with $(4 + \epsilon)^r$ for any positive $\epsilon$, at the price of increasing $C_G$ by a factor depending on $\epsilon$, but independent of $G$. We have not in this paper made a substantial effort to optimize $C_G$, since the most important parameter in Theorem \ref{maintheorem} is $k$, because it is allowed to vary for fixed $G$.
\\
In the next section we study the pushforward $\mu$ of Haar measure. The reader wishing to turn immediately to the proof of Theorem \ref{maintheorem} may skip to section 4, temporarily taking Proposition \ref{pushforward} for granted, since this is the only result of the next section that is used the proof of Theorem \ref{maintheorem}.

\section{The Pushforward of Haar Measure}

Let $G$ be a compact simply-connected semisimple Lie group. Then the representations of $G$ are in one-to-one correspondence with the representations of its Lie algebra $\mathfrak{g}$ via the correspondence $\Gamma \mapsto d\Gamma$, where $d\Gamma$ is the differential of $\Gamma$ at the identity. This correspondence gives the following commutative diagram:
\begin{center}
$
\begin{CD}
\mathfrak{g}   @> d\Gamma >>  \mathfrak{gl}(V)\\
@VVexpV    @VVexpV\\
G   @>\Gamma>> GL(V)
\end{CD}
$
\end{center}
In particular, from the representation theory of semisimple Lie algebras, we know that the representation ring of $G$ is a polynomial ring in certain irreducible representations, called fundamental representations, which are the irreducible representations with highest weights equal to the dual basis  of the basis of some fixed Cartan subalgebra $\mathfrak{h}\subset \mathfrak{g}$ consisting of the coroots $H_\alpha$ corresponding to the simple roots $\alpha$. Now we denote the fundamental representations of $G$ by $\delta_1$, ..., $\delta_{r_1}$, $\gamma_1$,..., $\gamma_{r_2}$, $\gamma_1^*$,..., $\gamma_{r_2}^*$  (If $\beta$ is a representation, then $\beta^*$ denotes the dual representation.), where the $\alpha_i$ are self-dual (equivalently, their characters are real-valued), the $\gamma_i$ are not, and $r_1 + 2r_2 = r$, where $r = rank(G)$ is the rank of $G$. We let $\chi_1$, ..., $\chi_{r_1}$, $\Gamma_1$,..., $\Gamma_{r_2}$, $\bar{\Gamma}_1$,..., $\bar{\Gamma}_{r_2}$ denote their characters. We use these characters to define a map from $G$ to $\mathbb{R}^{r_1} \times \mathbb{C}^{r_2} = \mathbb{R}^r$ (via the identification $\mathbb{C} = \mathbb{R}^2$) by letting $\chi_j$ map to $x_j$ and $\Gamma_j$ to $z_j = \alpha_j + i\beta_j$, and we call this map $P$. That is, we define $P: G\rightarrow \mathbb{R}^{r_1} \times \mathbb{C}^{r_2} = \mathbb{R}^r$ by
\begin{align}
\label{eq:pushmap}
P(g) := (\chi_1(g), ..., \chi_{r_1}(g), \Gamma_1(g), ... , \Gamma_{r_2}(g)).
\end{align}
We note that $P$ is a class function that is injective on the set of conjugacy classes. That $P$ is a class function is clear. That $P$ is injective on the set of conjugacy classes is a consequence of the Peter-Weyl Theorem. Indeed, if $g$, $h\in G$ and $P(g) = P(h)$, then since every character is a polynomial in the fundamental characters, $\chi(g) = \chi(h)$ for every character $\chi$ of $G$. Thus, by Peter-Weyl, $f(g) = f(h)$ for every continuous class function $f: G \rightarrow \mathbb{C}$, so $g$ and $h$ are conjugate.
Now our main concern in this section is to study the pushforward $\mu$ of Haar measure $dg$ on $G$ (normalized to have mass one) by $f$. That is,
\begin{align*}
\mu = P_*(dg).
\end{align*}
Thus the measure $\mu$ is defined by $\mu(B) := dg(P^{-1}(B))$ for every Borel set $B \subset \mathbb{R}^r$.
Our goal in this section 
is to prove that this pushforward measure has continuous density with respect to Lebesgue measure. That is, we wish to prove that 
\begin{align*}
\mu = FdX,
\end{align*}
where $dX$ is Lebesgue measure on $\mathbb{R}^r$ and $F: \mathbb{R}^r \rightarrow \mathbb{R}$ is a continuous function. We will actually do more, by giving a formula for $F$. 
First, we make a few observations and fix some notation. Let $A = P(G) \subset \mathbb{R}^r$ denote the image of $G$ under $f$. Now since $P$ is a class function, if we choose a maximal torus $T\subset G$  (fixed from here on out), we lose no information by considering the restriction $P: T\rightarrow \mathbb{R}^r$, since every element of $G$ is conjugate to an element of $T$. It is mainly this restricted map with which we shall work. Now two elements of $T$ are conjugate in $G$ if and only if they lie in the same orbit under the action of the Weyl group $W$. Thus, $P$ descends to a continuous map $P: T/W \rightarrow A$. Our first simple but crucial observation is the following.

\begin{lemma}
\label{homeomorphism}
$P:T/W \rightarrow P(G)$ is a homeomorphism.
\end{lemma}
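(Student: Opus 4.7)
The plan is to observe that the induced map $P: T/W \to P(G)$ is a continuous bijection from a compact space to a Hausdorff space, and then invoke the standard topological fact that such a map is automatically a homeomorphism. So the work reduces to verifying continuity, surjectivity, and injectivity.

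First I would note that $T$ is compact (being a closed subgroup of the compact group $G$) and $W$ is finite, so $T/W$ is compact. The target $P(G) \subset \mathbb{R}^r$ is Hausdorff as a subspace of $\mathbb{R}^r$. The map $P: T \to \mathbb{R}^r$ is continuous because each fundamental character is continuous, and it is $W$-invariant because characters are class functions on $G$ and any two $W$-conjugate elements of $T$ are $G$-conjugate. Hence $P$ descends to a continuous map $P: T/W \to P(G)$.

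For surjectivity, I would use the classical fact that in a compact connected Lie group every element is conjugate to an element of the maximal torus $T$. Thus $P(G) = P(T)$, and so the induced map $T/W \to P(G)$ is surjective. For injectivity, the paper has already shown that $P$ separates conjugacy classes in $G$: if $P(t_1) = P(t_2)$ for $t_1, t_2 \in T$, then $t_1$ and $t_2$ are conjugate in $G$. The main point I need is then the standard result that two elements of $T$ which are $G$-conjugate are already $W$-conjugate, i.e.\ $W$-orbits on $T$ coincide with $G$-conjugacy classes intersected with $T$. Hence $t_1$ and $t_2$ define the same class in $T/W$, proving injectivity of $P: T/W \to P(G)$.

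Having a continuous bijection from the compact space $T/W$ to the Hausdorff space $P(G)$, I would conclude that $P$ is a homeomorphism by the standard argument: it is a closed map (continuous images of compact sets are compact, hence closed in a Hausdorff space), and a continuous closed bijection is a homeomorphism. The only step that is not entirely formal is the use of the fact that $G$-conjugate torus elements are $W$-conjugate, which relies on the conjugacy of maximal tori together with the identification $W = N_G(T)/T$; I would cite this from standard compact Lie group theory rather than reprove it.
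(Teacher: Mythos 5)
Your proposal is correct and follows essentially the same route as the paper: both establish that the induced map is a continuous bijection from the compact space $T/W$ to the Hausdorff space $P(G)$ (using the injectivity of $P$ on conjugacy classes together with the fact that $G$-conjugacy on $T$ coincides with $W$-conjugacy) and then invoke the standard compact-to-Hausdorff argument. Your write-up merely fills in a few routine details (compactness of $T/W$, continuity of the characters) that the paper leaves implicit.
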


\begin{proof}
$P: T/W \rightarrow A := P(G)$ is continuous, and it is surjective by definition. It's injective, because $P$ is injective on the set of conjugacy classes in $G$, and two elements of $T$ are conjugate in $G$ if and only if they lie in the same orbit under the Weyl group action. Thus, $P: T/W \rightarrow A$ is a continuous bijection. Since any continuous bijection from a compact space to a Hausdorff space is a homeomorphism, this proves the lemma.
\end{proof}

We return now to looking at $P$ as a map from $T$ to $A \subset \mathbb{R}^r$. Since our goal is to understand the pushforward of Haar measure, we must first understand the Haar measure that $T = (\mathbb{R}/2\pi \mathbb{Z})^r$ inherits from $G$. For this we have the Weyl integration formula. Let $d\Theta = d\theta_1...d\theta_r$, where $d\theta$ is the usual Lebesgue measure on $\mathbb{R}/2\pi \mathbb{Z}$, and let $R^+$ denote the positive roots of $\mathfrak{g}$, the Lie algebra of $G$. Then we have the following.

\begin{lemma}[Weyl Integration Formula]
\label{weylintegrationformula}
Let $f: G \rightarrow \mathbb{R}$ be a continuous class function. Then
\begin{align*}
\int_G fdg = (\frac{1}{2\pi})^r \int_{T/W} \left| \prod_{\alpha \in R^+} (e^{\alpha/2} - e^{-\alpha/2})\right| ^2fd\Theta.
\end{align*}
\end{lemma}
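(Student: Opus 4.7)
The plan is to deduce the Weyl Integration Formula by a change-of-variables calculation for the conjugation map, following the standard approach. First I would introduce the map
\[
\psi : G/T \times T \to G, \qquad (gT,t) \mapsto gtg^{-1},
\]
which is well-defined because $T$ is abelian. Two classical facts drive everything: (i) $\psi$ is surjective, since every element of a compact connected Lie group lies in some maximal torus and all maximal tori are conjugate; and (ii) for a regular element $t \in T$ (one with $\alpha(t) \neq 0$ for every root $\alpha$), the fiber $\psi^{-1}(gtg^{-1})$ consists of exactly $|W|$ points, corresponding to the orbit of $t$ under the Weyl group $W = N_G(T)/T$. Non-regular elements form a closed subset of measure zero in $G$, so these generic properties suffice.

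Next I would compute the Jacobian of $\psi$ at a point $(eT,t)$ with $t$ regular. Decomposing the tangent space as $(\mathfrak{g}/\mathfrak{t}) \oplus \mathfrak{t}$ and considering an infinitesimal deformation of the form $(e^{\epsilon X}T,\, t\,e^{\epsilon H})$ with $X \in \mathfrak{g}/\mathfrak{t}$ and $H \in \mathfrak{t}$, the differential (after left-translating the target by $t^{-1}$) takes the form $(X,H) \mapsto H + (\text{Ad}(t^{-1}) - 1)X$. On the complexified root space $\mathfrak{g}_\alpha$ the operator $\text{Ad}(t^{-1}) - 1$ is scalar multiplication by $e^{-\alpha(t)} - 1$, and pairing $\alpha$ with $-\alpha$ gives
\[
\det\bigl((\text{Ad}(t^{-1}) - 1)\big|_{\mathfrak{g}/\mathfrak{t}}\bigr) \;=\; \prod_{\alpha \in R^+}(e^{-\alpha(t)}-1)(e^{\alpha(t)}-1) \;=\; (-1)^{|R^+|} \prod_{\alpha \in R^+}\bigl(e^{\alpha(t)/2} - e^{-\alpha(t)/2}\bigr)^{2},
\]
whose absolute value is precisely $\bigl|\prod_{\alpha \in R^+}(e^{\alpha/2} - e^{-\alpha/2})\bigr|^{2}$.

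Finally I would feed these ingredients into the change-of-variables formula. For a class function $f$ we have $f(gtg^{-1}) = f(t)$, so combining the generic $|W|$-to-$1$ nature of $\psi$ with the Jacobian computation gives
\[
\int_G f\,dg \;=\; \frac{1}{|W|}\int_{G/T}\!\int_T f(t) \,\Bigl|\prod_{\alpha \in R^+}(e^{\alpha/2}-e^{-\alpha/2})\Bigr|^{2} dt\, d(gT),
\]
where $dt$ is the probability Haar measure on $T$ and $d(gT)$ is the corresponding quotient measure on $G/T$, normalized so that $G/T$ has total mass one (i.e.\ so that $dg = d(gT)\,dt$). The $G/T$ factor then contributes $1$, and using $dt = d\Theta/(2\pi)^r$ together with the Weyl-invariance of the integrand (which converts $\int_T$ into $|W|\int_{T/W}$) cancels the $1/|W|$ and delivers the stated formula. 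I expect the main obstacle to be the Jacobian computation itself — keeping track of signs and absolute values, the identification of roots as functions both on $\mathfrak{t}$ and on $T$, and confirming that the quotient-measure normalization on $G/T$ produces exactly the $(2\pi)^{-r}$ prefactor.
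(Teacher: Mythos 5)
The paper does not prove this lemma at all; it simply cites Bourbaki ([B], p.\ 334, Corollary 1), and the argument given there is essentially the one you sketch: the conjugation map $\psi : G/T \times T \to G$, its generic $|W|$-to-$1$ behaviour, and the Jacobian $\bigl|\det\bigl((\mathrm{Ad}(t^{-1})-1)|_{\mathfrak{g}/\mathfrak{t}}\bigr)\bigr| = \bigl|\prod_{\alpha \in R^+}(e^{\alpha/2}-e^{-\alpha/2})\bigr|^2$. Your outline is correct; the one normalization worry you raise at the end can be dispatched cheaply by plugging $f \equiv 1$ into the formula and invoking the Weyl denominator identity $\int_T \bigl|\prod_{\alpha \in R^+}(e^{\alpha/2}-e^{-\alpha/2})\bigr|^2\,dt = |W|$, which confirms that the probability-measure normalizations (and hence the $(2\pi)^{-r}$ prefactor) are consistent.
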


For a proof, see [B], p. 334, Corollary 1.
Now in pushing this measure forward by $P$, we must change variables from $d\Theta$ to $dX$. This involves a Jacobian. So let $Jac := (\frac{\partial X}{\partial \Theta})$ be the Jacobian matrix of the map $P$, and let $J(\Theta)$ denote $det(Jac)$. We then have $d\Theta = \frac{dX}{|J(\Theta)|}$, so the pushforward measure $\mu$ on $A$ is given, using the Weyl Integration Formula, by

\begin{align}
\label{eq:measure1}
\mu = \frac{1}{(2\pi)^r}\frac{\left| \displaystyle \prod_{\alpha \in R^+} (e^{\alpha/2} - e^{-\alpha/2})\right|^2}{|J|}dX.
\end{align}

Since $P: T/W \rightarrow A$ is a homeomorphism, a continuous function on $T$ gives a continuous function on $A$. There are therefore two obstacles to $F$ being continuous (where $F$, as before, is the density function of $\mu$, i.e. the expression appearing before the $dX$ in (\ref{eq:measure1})). The first potential problem is that the Jacobian may vanish somewhere without the numerator vanishing to a high enough order to cancel it out. The second is that, even if $F$ is continuous on $A$, in order for it to extend to a continuous function on $\mathbb{R}^r$, we need it to vanish on $\partial A$, the boundary of $A$, since it vanishes outside of $A$. We shall show that in fact neither of these problems is an issue. This follows easily from the following formula for the Jacobian, plus our determination of $f^{-1}(\partial A)$.

\begin{proposition}
\label{jacobian}
$|J| = 2^{-r_2} \left| \displaystyle \prod_{\alpha \in R^+} (e^{\alpha/2} - e^{-\alpha/2}) \right|$.
\end{proposition}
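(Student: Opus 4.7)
My plan is to reduce the claim to a ``complex Jacobian'' built from all $r$ fundamental characters simultaneously, and then to identify that Jacobian with the Weyl denominator via the standard theory of $W$-anti-invariant Laurent polynomials on $T$. To this end, I would introduce the auxiliary map $\tilde{P}:T\to\mathbb{C}^r$ whose $r$ components are $(\chi_1,\dots,\chi_{r_1},\Gamma_1,\dots,\Gamma_{r_2},\bar{\Gamma}_1,\dots,\bar{\Gamma}_{r_2})$, and set $\tilde{J}:=\det(\partial\tilde{P}_j/\partial\theta_k)$. For each of the $r_2$ non-self-dual pairs, the change of coordinates between $(\alpha_j,\beta_j)$ and $(\Gamma_j,\bar\Gamma_j)$ has matrix $\bigl(\begin{smallmatrix}1/2 & 1/2\\ -i/2 & i/2\end{smallmatrix}\bigr)$ of determinant $i/2$; applying these row operations to the Jacobian matrix defining $J$ yields $|\tilde{J}|=2^{r_2}|J|$, so it suffices to prove $|\tilde{J}|=|A_\rho|$, where $A_\rho := \prod_{\alpha\in R^+}(e^{\alpha/2}-e^{-\alpha/2})$ is the Weyl denominator.

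Next, because $\bar{\Gamma}_j$ is the character of $\gamma_j^{\ast}$, itself a fundamental representation, the $r$ components of $\tilde{P}$ are precisely the characters $\chi_{\lambda_1},\dots,\chi_{\lambda_r}$ of the $r$ fundamental representations of $G$, indexed by the $r$ fundamental weights $\lambda_1,\dots,\lambda_r$. From the identity $d\chi_{\lambda_1}\wedge\cdots\wedge d\chi_{\lambda_r}=\tilde{J}\cdot d\theta_1\wedge\cdots\wedge d\theta_r$, comparing $W$-actions shows that $\tilde{J}$ is $W$-anti-invariant: the left-hand side is $W$-invariant (each $\chi_{\lambda_i}$ is), while $d\theta_1\wedge\cdots\wedge d\theta_r$ transforms by $\mathrm{sgn}(w)$ since the $W$-action on $\mathfrak{t}^{\ast}$ has determinant $\mathrm{sgn}(w)$. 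By the classical fact that the $W$-anti-invariant Laurent polynomials on $T$ form a free rank-one module over the $W$-invariants with generator $A_\rho$, I can write $\tilde{J}=A_\rho\cdot Q$ with $Q$ a $W$-invariant Laurent polynomial.

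I would then pin down $Q$ by a leading-term analysis in the dominance order. In coordinates where $e^{\lambda_i}(t)=e^{i\theta_i}$, the highest weight $\lambda_i$ of $\chi_{\lambda_i}$ contributes $ie^{i\theta_i}$ to the $(i,i)$-entry of the Jacobian matrix and $0$ to its off-diagonal entries, producing the monomial $i^r e^{i\rho}$ in $\tilde{J}$ from the diagonal. Since every other weight of $V_{\lambda_i}$ is strictly dominated by $\lambda_i$, any other contribution to the expansion of the determinant has exponent strictly less than $\sum_i\lambda_i=\rho$ in the dominance order. Because $A_\rho$ has leading term $e^{i\rho}$ with coefficient $1$, the invariant $Q$ has top weight $0$ and is therefore the constant $i^r$; combining with the first step yields $|J|=2^{-r_2}|A_\rho|$. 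The main obstacle is precisely this leading-term argument: one must verify that no cancellation among non-top weight contributions can reach exponent $\rho$, which relies on the identity $\rho=\sum_i\lambda_i$ and the strict dominance of each highest weight $\lambda_i$ over the other weights of $V_{\lambda_i}$.
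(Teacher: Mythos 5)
Your overall strategy is sound and, for the divisibility step, genuinely different from the paper's. After the same initial column reduction ($|\tilde J| = 2^{r_2}|J|$) and the same anti-invariance argument (chain rule / pullback of forms), the paper does \emph{not} invoke the structure theorem for anti-invariants; instead it proves divisibility of $J$ by the Weyl denominator from scratch, by showing topologically that $T/W$ is a manifold with boundary $\bigcup_{\alpha\in R^+}\{e^{\alpha}=1\}$, that $P$ carries this boundary to $\partial(P(G))$, and hence by the inverse function theorem that $J$ vanishes there; divisibility then follows from coprimality of the factors $e^{\alpha}-1$. Your appeal to the classical fact that the anti-invariants form a free rank-one module over the invariants generated by $A_\rho$ (Bourbaki, Ch.~VI) is legitimate here --- the relevant lattice is the full weight lattice since $G$ is simply connected --- and it shortcuts several of the paper's lemmas. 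The degree/dominance argument pinning $Q$ down to a constant is then essentially the paper's.

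There is, however, one unjustified step that affects the final constant: your leading-term computation is carried out ``in coordinates where $e^{\lambda_i}(t)=e^{i\theta_i}$,'' but $J$ is defined with respect to the fixed coordinates $\theta_1,\dots,\theta_r$ on $T=(\mathbb{R}/2\pi\mathbb{Z})^r$ entering the Weyl integration formula. Passing to weight-adapted coordinates multiplies the Jacobian by $\det(\partial\lambda_i/\partial\theta_j)$, and your conclusion $|J|=2^{-r_2}|A_\rho|$ silently assumes this determinant has absolute value $1$. That is equivalent to the fundamental weights being a $\mathbb{Z}$-basis of the character lattice of $T$, which is exactly where simple-connectedness of $G$ enters; for a non-simply-connected group the factor would be the index of the root-lattice-related sublattice and the formula would change. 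The paper closes this by showing the map $\Theta\mapsto(\lambda_\beta(\Theta))_\beta$ is an isomorphism of tori (injectivity via Peter--Weyl), so $|\det(\partial\lambda_\beta/\partial\theta_i)|=1$. You need to supply this verification (or an equivalent lattice argument) for your computation to yield the stated constant.
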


As an immediate consequence of this, we shall give an exact formula for the pushforward of Haar measure at the end of this section. We wait until then to record the formula because by then we will also be able to prove continuity of the density function of $\mu$ with respect to Lebesgue measure, which is not immediately obvious from the formula for $\mu$.
The proof of Proposition \ref{jacobian} will occupy us for the rest of this section. 
First, we define an order relation on the set $L$ of integral weights of $G$ as follows. Let $\mathfrak{h} = Lie(T)$ be the Cartan subalgebra corresponding to our choice of maximal torus $T$. Recall that in defining positive roots, one chooses a hyperplane $H \subset \mathfrak{h}^*$, and declares an element of $\mathfrak{h}^*$ to be positive if it lies on one side of $H$ and negative if it lies on the other side. If we choose $H$ (as can certainly be done) so that $H \cap L = \{0\}$, then this induces a total ordering on $L$. For $\alpha$, $\beta \in L$, we write $\alpha > \beta$ if $\alpha$ is greater than $\beta$ with respect to this order relation.
Now we give an important definition.

\begin{definition}
An exponential polynomial $Q$ is a sum of the form $Q = \sum_{\lambda \in L} c_\lambda e^\lambda$, where the sum is over the integral weights $\lambda$ of $\mathfrak{g}$, $c_\lambda \in \mathbb{C}$, and $c_\lambda = 0$ for all but finitely many $\lambda$. The degree of $Q$, denoted $deg(Q)$, is the  weight $\lambda$ such that $c_\lambda \neq 0$ and $c_\beta = 0$ for all $\beta > \lambda$.
\end{definition}
We can consider exponential polynomials as functions on the torus or as formal expressions. There is no essential difference between these two points of view because the homomorphism from the ring of exponential polynomials to the ring of functions on $T$ given by sending a polynomial to its corresponding function is injective. Indeed, the independence of the exponential monomials with different weights follows from their orthogonality.
Now recall that any element $w\in W$ has a length $l(w)$, well-defined modulo two, and defined by saying that $w$ is the product of $l(w)$ reflections in simple roots. We define $(-1)^w$ to be $(-1)^{l(w)}$.
We then define an action of the Weyl group on exponential polynomials by setting, for $w\in W$ and $Q = \sum c_\lambda e^\lambda$,
\begin{align*}
w(Q) = \sum c_\lambda e^{w\lambda}.
\end{align*}
We call an exponential polynomial invariant (respectively, anti-invariant) provided that $w(Q) = Q$ (respectively, $w(Q) = (-1)^wQ$) for every $w\in W$. Now let $\Pi$ denote the set of simple roots of $\mathfrak{g}$. We then define as usual the integral weight $\rho$ to be
\begin{align*}
\rho := \frac{1}{2}\sum_{\alpha \in R^+} \alpha = \sum_{\beta \in \Pi} \beta.
\end{align*}
Our first step toward determining $J$ is the following lemma.

\begin{lemma}
\label{anti-invariance of J}
$J$ is an anti-invariant exponential polynomial of degree $\leq \rho$.
\end{lemma}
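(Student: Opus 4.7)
The plan is to first rewrite $J$ in a form where every row of the Jacobian is a derivative of a genuine character rather than a real or imaginary part of one, and then to deduce the exponential polynomial structure and the anti-invariance separately from standard facts.

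The first step is to eliminate the rows corresponding to $\alpha_j = (\Gamma_j + \bar\Gamma_j)/2$ and $\beta_j = (\Gamma_j - \bar\Gamma_j)/(2i)$ in favor of rows involving $\Gamma_j$ and $\bar\Gamma_j$ themselves. Using multilinearity of the determinant in each such pair of rows, the expansion produces four summands per non-self-dual pair, two of which vanish because two rows become equal. A short sign computation shows that the two surviving ones combine into a single factor of $i/2$ times the determinant with rows $\partial\Gamma_j/\partial\theta_k$ and $\partial\bar\Gamma_j/\partial\theta_k$. Performing this simultaneously for all $r_2$ pairs yields
\[
J = \left(\frac{i}{2}\right)^{r_2}\det M,
\]
where $M$ is the $r\times r$ matrix whose rows are $\partial\chi_i/\partial\theta_k$ for the $r_1$ self-dual fundamental characters together with $\partial\Gamma_j/\partial\theta_k$ and $\partial\bar\Gamma_j/\partial\theta_k$ for each of the $r_2$ non-self-dual pairs.

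With $J$ in this form, I would check that $\det M$ is an exponential polynomial of degree at most $\rho$. Each fundamental character is an exponential polynomial whose degree is the fundamental weight attached to that representation, and since $\partial e^\mu/\partial\theta_k$ is a scalar multiple of $e^\mu$, differentiation preserves the support. Thus every entry of the row of $M$ associated to a fundamental representation $\delta$ is an exponential polynomial of degree at most $\lambda_\delta$. Expanding $\det M$ by Leibniz, each summand is a product of $r$ such entries, one per row, and so has degree at most $\sum_\delta\lambda_\delta$, where the sum ranges over all $r_1+2r_2=r$ fundamental representations of $G$. This equals $\sum_{i=1}^r\lambda_i=\rho$ by the standard identity expressing $\rho$ as the sum of the fundamental weights, so $\deg J\leq\rho$, unaffected by the scalar $(i/2)^{r_2}$.

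For anti-invariance, $W$ acts linearly on $\mathfrak{h}$ as a group generated by reflections, so $\det(w|_{\mathfrak{h}})=(-1)^{l(w)}$, and this linear action descends to $T$ via the exponential map. Since every coordinate of $P$ is a class function on $G$, it is invariant under the induced $W$-action on $T$, so $P\circ w=P$. Applying the chain rule and taking determinants gives $J(\Theta)=J(w\Theta)\det(w)$, which translates to $w(J)=(-1)^w J$ in the exponential polynomial language of the paper. The main obstacle is the bookkeeping in the multilinearity step: the factors of $1/(2i)$ and the sign that arises when the order of two rows is swapped must be tracked carefully to produce the clean constant $(i/2)^{r_2}$. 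Once $J$ has been rewritten as a constant multiple of $\det M$, both the degree bound and the anti-invariance become essentially formal consequences of standard facts about characters and the Weyl group action.
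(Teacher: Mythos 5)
Your proposal is correct and follows essentially the same route as the paper: rewrite the real/imaginary-part rows in terms of $\Gamma_j$ and $\bar\Gamma_j$ to get $J=(i/2)^{r_2}\det\bigl(\partial\chi_\beta/\partial\theta_i\bigr)$ (the paper does this via explicit column operations, you via multilinearity — the same computation), then bound the degree by Leibniz expansion using $\sum_\beta\lambda_\beta=\rho$, and obtain anti-invariance from $P\circ w=P$, the chain rule, and $\det(w)=(-1)^{l(w)}$.
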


\begin{proof}
We first check that $J$ is an exponential polynomial of degree $\leq \rho$. For each $\beta \in \Pi$, let $\chi_\beta$ denote the fundamental character with highest weight equal to the weight $\lambda_\beta$ defined by 
\begin{align*}
\lambda_\beta(H_\alpha) = 
\begin{cases}
1 & \mbox{if}\quad \alpha = \beta \\
0 & \mbox{if}\quad \beta \neq \alpha \in \Pi, \\
\end{cases}
\end{align*}
where $H_\alpha$ is the coroot of the simple root $\alpha$. 
We have 
\begin{align*}
J = 
det\left[(\frac{\partial \chi_\beta}{\partial \theta_i})_{\chi_\beta \; \mbox{real}} \quad \left(\frac{\partial}{\partial \theta_i} \frac{1}{2}(\chi_\beta + \bar{\chi_\beta})\right)_{\chi_\beta \; \mbox{complex}}  \quad  \left(\frac{\partial}{\partial \theta_i} \frac{1}{2i}(\chi_\beta - \bar{\chi_\beta}\right)_{\chi_\beta \; \mbox{complex}}  \right]_{1 \leq i \leq r}.
\end{align*}
Now taking a factor of $1/2$ out of each column of the second matrix and a factor of $i/2$ out of each column of the third,  then subtracting the $\beta$ column of the third matrix from the $\beta$ column of the second, pulling a factor of two out of the $\beta$ column of the second, and finally adding the $\beta$ column of the second matrix to the $\beta$ column of the third, we get that 
\begin{align}
\label{eq:jacobfactor}
J = (\frac{i}{2})^{r_2}  det( \frac{\partial \chi_\beta}{\partial \theta_i})_{\beta \in \Pi, 1 \leq i \leq r}.
\end{align}
Now $\chi_\beta = e^{\lambda_\beta} +$ terms of lower degree. $\lambda_\beta$ is a linear form in the $\theta_i$, so $
\frac{\partial \chi_\beta}{\partial \theta_i} = c_\beta e^{\lambda_\beta} +$ terms of lower degree for some constant $c_\beta$. From the usual formula for the determinant of a square matrix as a polynomial in the entries, it follows that $J$ is indeed an exponential polynomial of degree $\leq \sum_{\beta \in \Pi} \beta = \rho$. To see that $J$ is anti-invariant, we recall that $f$ is invariant under the Weyl group, hence for any reflection $w$ in a root, we have that $f\circ w = f$, hence $f$ and $f\circ w$ have the same Jacobian. But the Jacobian of $f\circ w$ is, by the chain rule, $-w(J)$, since the Jacobian of a reflection is $-1$. Thus, $w(J) = -J$ for any reflection in a root, from which it follows that $J$ is anti-invariant, since such reflections generate the Weyl group.
\end{proof}

\begin{lemma}
\label{manifold}
$T/W$ is a (topological) $r$-manifold with boundary. The boundary $\partial(T/W)$ of $T/W$ is 
\begin{align*}
\partial(T/W) = \{\Theta: e^{\alpha(\Theta)} = 1 \; \mbox{for some } \alpha \in R^+\}.
\end{align*}
\end{lemma}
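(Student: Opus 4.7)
The plan is to realize $T/W$ as (homeomorphic to) a closed fundamental alcove for the affine Weyl group acting on $\mathfrak{t} = \mathrm{Lie}(T)$. Since $G$ is compact, simply-connected, and semisimple, the exponential map identifies $T$ with $\mathfrak{t}/\Lambda^\vee$, where $\Lambda^\vee$ is the coroot lattice; simple-connectedness is used crucially to ensure that $\ker(\exp\colon \mathfrak{t}\to T)$ is exactly $\Lambda^\vee$, not a proper sublattice. The linear $W$-action on $\mathfrak{t}$ together with translations by $\Lambda^\vee$ generates the affine Weyl group $W_{\mathrm{aff}} = W\ltimes \Lambda^\vee$, and $\mathfrak{t}/W_{\mathrm{aff}} = T/W$ under the identifications above.

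Next I would invoke the standard fact that $W_{\mathrm{aff}}$ acts properly on $\mathfrak{t}$ with fundamental domain a closed $r$-simplex $\Delta$, cut out by the inequalities $\alpha(\Theta)\ge 0$ for each simple root $\alpha\in\Pi$ together with $\tilde\alpha(\Theta)\le 2\pi$ for the highest root $\tilde\alpha$ (normalizations chosen to match $T = (\mathbb{R}/2\pi\mathbb{Z})^r$). The composition $\Delta \hookrightarrow \mathfrak{t} \twoheadrightarrow T/W$ is a continuous bijection from a compact space to a Hausdorff space, hence a homeomorphism. A closed $r$-simplex is homeomorphic to a closed $r$-ball, which is manifestly a topological $r$-manifold with boundary, so the first assertion of the lemma follows.

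For the boundary description, each of the $r+1$ facets of $\Delta$ lies in an affine hyperplane of the form $\{\Theta : \alpha(\Theta) = 2\pi n\}$ for some $\alpha\in R^+$ (with the highest root accounting for the extra facet) and some integer $n$; such a hyperplane is precisely the locus where the character $e^\alpha$ takes the value $1$. Conversely, if $\Theta\in T$ satisfies $e^{\alpha(\Theta)} = 1$ for some $\alpha\in R^+$, then any lift of $\Theta$ to $\mathfrak{t}$ lies on an affine reflection hyperplane of $W_{\mathrm{aff}}$, and translating this lift into $\Delta$ sends it to a wall; hence the image of $\Theta$ in $T/W$ lies on $\partial\Delta$. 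This matches $\partial(T/W)$ with the claimed set. The principal subtlety is the first step: the identification $T \cong \mathfrak{t}/\Lambda^\vee$ via simple-connectedness, and the classical but non-trivial fact that $\Delta$ is a genuine fundamental domain for $W_{\mathrm{aff}}$; once these structural facts about affine Weyl groups are accepted, the rest is topological bookkeeping and matching root hyperplanes with character equations.
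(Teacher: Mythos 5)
Your proof is correct, but it takes a genuinely different route from the paper. The paper argues locally: for each $\Theta\in T/W$ it examines the isotropy group of a preimage in $\mathfrak{h}$ under the (affine) Weyl group action and shows the point has a neighborhood homeomorphic either to an $r$-disk (trivial local stabilizer, interior point) or to a half-disk (stabilizer generated by reflections, boundary point), the latter case occurring exactly on the loci $e^{\alpha}=1$. You instead give a global identification $T/W\cong\mathfrak{t}/W_{\mathrm{aff}}\cong\Delta$ with the closed fundamental alcove, using simple-connectedness to identify $\ker(\exp)$ with the coroot lattice, and then read off both the manifold-with-boundary structure and the description of the boundary from the polytope $\Delta$; implicitly you also use, as the paper does in its Lemma \ref{boundary}, that being a manifold-boundary point is a topological invariant, so that $\partial\Delta$ maps onto $\partial(T/W)$. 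Your approach buys a stronger conclusion (that $T/W$ is homeomorphic to a closed $r$-ball) and replaces the local quotient-of-a-disk-by-a-reflection-group analysis with the standard fundamental-domain theorem for affine Weyl groups; the cost is that it leans on that nontrivial structural input from Bourbaki, whereas the paper's local argument is more self-contained in spirit. One small imprecision: for $G$ semisimple but not simple there is no single highest root, and the alcove is a product of simplices (one per simple factor) rather than a simplex; since such a product is still a compact convex polytope with nonempty interior, hence homeomorphic to a closed $r$-ball, your argument is unaffected, but the statement should be adjusted.
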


\begin{proof}
Let $\mathfrak{h} = Lie(T)$. Then we have the exponential map $exp: \mathfrak{h} \rightarrow T$. Let $x \in \mathfrak{h}$, $\Theta = exp(x)$. Suppose first that $\Theta = exp(x) \neq exp(b)$ for any $b$ in the boundary of a Weyl chamber. The set of points $y$ such that $exp(y) = exp(b)$ for some $b$ in the boundary of a Weyl chamber is a union of discrete translates of finitely many hyperplanes. It therefore follows that $x$ has a neighborhood $U$ such that $V = exp(U)$ satisfies $w(V) \cap V = \phi$ for all $1 \neq w \in W$. Then since $exp$ is a local diffeomorphism, shrinking $U$ if necessary, $V$ is a neighborhood of $\Theta$ with this same property. It then follows that $\Theta \in int(T/W)$ in the sense of interior of manifolds, that is, it has a neighborhood homeomorphic to an $r$-disk. Suppose, on the other hand, the $x$ lies in the boundary of a Weyl chamber. Then there is a small disk $U$ about $x$ upon which a nontrivial subgroup $\tilde{W}$ of the Weyl group acts, and $U/\tilde{W}$ is homeomorphic to the quotient of a disk by a group generated by reflections, hence is homeomorphic to half-disk with $x$ in its boundary. It follows, since $exp$ is a local homeomorphism, that a sufficiently small neighborhood $V$ of $\Theta = exp(x)$ satisfies the same properties, that is, $\Theta$ has no neighborhood in $T/W$ homeomorphic to a disk, but it has one homeomorphic to a half-disk. Thus, $\Theta \in \partial(T/W)$, where boundary is taken in the sense of manifolds. Finally, the image under $exp$ of the boundaries of Weyl chambers is precisely the set $\{\Theta: e^{\alpha(\Theta)} = 1 \; \mbox{for some } \alpha \in R^+\}$, so the proof of the lemma is complete.
\end{proof}

\begin{lemma}
\label{boundary}
$P: T/W \rightarrow P(G)$  satisfies $P^{-1}(\partial(P(G))) = \partial (T/W)$.
\end{lemma}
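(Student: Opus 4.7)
The plan is to combine Lemma \ref{homeomorphism}, which gives a homeomorphism $P : T/W \to P(G)$, with Brouwer's invariance of domain, in order to identify the manifold-theoretic boundary $\partial(T/W)$ (described by Lemma \ref{manifold}) with the preimage under $P$ of the topological boundary $\partial P(G) \subset \mathbb{R}^r$.

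For the containment $P^{-1}(\partial P(G)) \subset \partial(T/W)$, I would argue contrapositively. Suppose $\Theta$ lies in the manifold interior of $T/W$; then it has a neighborhood $U \subset T/W$ together with a homeomorphism $\phi : V \to U$ from some open $V \subset \mathbb{R}^r$. The composition $P \circ \phi : V \to \mathbb{R}^r$ is then a continuous injection defined on an open subset of $\mathbb{R}^r$, so invariance of domain forces the image $P(U) = (P \circ \phi)(V)$ to be open in $\mathbb{R}^r$. Since $P(U) \subset P(G)$ and $P(\Theta) \in P(U)$, this shows $P(\Theta) \in \mathrm{int}_{\mathbb{R}^r}(P(G))$, hence $P(\Theta) \notin \partial P(G)$.

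For the reverse containment $\partial(T/W) \subset P^{-1}(\partial P(G))$, I would argue by contradiction. Suppose $\Theta \in \partial(T/W)$ but $P(\Theta) \in \mathrm{int}_{\mathbb{R}^r}(P(G))$, and choose an open ball $B \subset \mathbb{R}^r$ with $P(\Theta) \in B \subset P(G)$. Since $P$ is a homeomorphism onto $P(G)$, the preimage $P^{-1}(B)$ is open in $T/W$, contains $\Theta$, and is carried homeomorphically by $P$ onto $B \cong \mathbb{R}^r$. This exhibits a Euclidean neighborhood of $\Theta$ in $T/W$, contradicting the characterization of $\partial(T/W)$ from Lemma \ref{manifold}.

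The only real obstacle is conceptual rather than computational: one must carefully distinguish the two senses of boundary in play---the manifold boundary of the abstract space $T/W$ on the one hand, and the topological frontier of $P(G)$ inside $\mathbb{R}^r$ on the other---and recognize that invariance of domain is precisely the tool that transports between them via the homeomorphism $P$. Once that observation is made, both inclusions drop out almost immediately, and no input from the root system or the Jacobian computation is needed at this stage.
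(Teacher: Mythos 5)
Your proof is correct, and it takes a mildly but genuinely different route from the paper's. The paper also argues by topological invariance, but via local homology: a point $x$ of an $r$-manifold with boundary is interior if and only if $H_{r-1}(U \setminus \{x\}) \neq 0$ for every neighborhood $U$ of $x$, so ``interior versus boundary'' is preserved by the homeomorphism of Lemma \ref{homeomorphism}. Strictly speaking, that argument only transports $\partial(T/W)$ to the boundary of $P(G)$ viewed as an abstract manifold with boundary; to get the statement in the form it is actually used later (Proposition \ref{pushforward} needs $\partial(P(G))$ to be the frontier of $P(G)$ inside $\mathbb{R}^r$, where $F$ must vanish for continuity), one must still identify the manifold boundary of $P(G)$ with its topological frontier in $\mathbb{R}^r$, and that identification is precisely invariance of domain. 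Your argument handles both steps at once: invariance of domain sends manifold-interior points of $T/W$ to $\mathbb{R}^r$-interior points of $P(G)$, and conversely an open ball $B \subset P(G)$ around $P(\Theta)$ pulls back to a Euclidean neighborhood of $\Theta$, which a boundary point of $T/W$ cannot have by the characterization in Lemma \ref{manifold}. So your version is, if anything, more careful about distinguishing the two senses of ``boundary'' in play, at the cost of quoting invariance of domain as a black box where the paper quotes the essentially equivalent local homology computation. Both yield complete proofs.
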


\begin{proof}
Since $P: T/W \rightarrow A := P(G)$ is a homeomorphism by Lemma \ref{homeomorphism}, and since $T/W$ is an $r$-manifold with boundary, it suffices to show that being in the interior of a manifold (versus in the boundary) is a topological property. This can be seen in many ways. Here's one: Let $M$ be an $r$-manifold. If $x\in int(M)$, then for any neighborhood $U$ of $x$, $H_{r-1}(U-\{x\}) \neq 0$, while if $x \in \partial (M)$, then $x$ has a neighborhood with $H_{r-1}(U-\{x\}) = 0$. Thus, being in the interior, or boundary, is a purely topological property, hence is preserved by $P$.
\end{proof}

\begin{corollary}
\label{J vanishes on boundary}
$J$ vanishes on $P^{-1}(\partial(P(G))) = \cup_{\alpha \in R^+} \{\Theta: e^{\alpha(\Theta)} = 1\}$.
\end{corollary}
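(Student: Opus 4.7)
The plan is to combine the three preceding lemmas. First, Lemma \ref{manifold} identifies $\partial(T/W)$ with $\cup_{\alpha \in R^+}\{\Theta : e^{\alpha(\Theta)} = 1\}$, and Lemma \ref{boundary} gives $P^{-1}(\partial P(G)) = \partial(T/W)$. Stringing these two identifications together immediately verifies the set-theoretic equality stated in the corollary, so the actual content to prove is that $J$ vanishes on each hyperplane $H_\alpha := \{\Theta : e^{\alpha(\Theta)} = 1\}$ for $\alpha \in R^+$.

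For this I would appeal to the anti-invariance of $J$ established in Lemma \ref{anti-invariance of J}. The condition $e^{\alpha(\Theta)} = 1$ says precisely that $\Theta$ lies in the kernel of the character $e^\alpha$ on $T$, which is the same as the fixed-point locus of the reflection $s_\alpha \in W$ acting on $T$. Viewing $J$ as a function on $T$, the action of $W$ on exponential polynomials corresponds to $(w \cdot J)(\Theta) = J(w^{-1}\Theta)$; so at such a fixed $\Theta$ one has $(s_\alpha J)(\Theta) = J(\Theta)$. On the other hand, anti-invariance yields $s_\alpha J = (-1)^{s_\alpha} J = -J$ as functions, giving $(s_\alpha J)(\Theta) = -J(\Theta)$. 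Comparing the two identities forces $J(\Theta) = -J(\Theta)$, hence $J(\Theta) = 0$.

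The only point that requires a moment's care is the assertion $(-1)^{s_\alpha} = -1$ for every $\alpha \in R^+$, not merely for simple roots. This follows from the fact that any root reflection $s_\alpha$ is $W$-conjugate to a simple reflection and that the sign $(-1)^w$ is a conjugation invariant --- concretely, it is the determinant of $w$ acting on $\mathfrak{h}^*$, which equals $-1$ for any reflection. With this in hand the argument is essentially a one-line consequence of Lemma \ref{anti-invariance of J}, so I do not anticipate any genuine obstacle.
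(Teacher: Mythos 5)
Your proof is correct, but it takes a genuinely different route from the paper's. The paper's proof handles the vanishing of $J$ topologically: by the inverse function theorem, if $J(\Theta)\neq 0$ then $P$ is a local diffeomorphism near $\Theta$, so $P(\Theta)$ lies in the interior of $P(G)$; hence $J$ vanishes on $P^{-1}(\partial P(G))$, and the identification of that set with $\cup_{\alpha\in R^+}\{e^{\alpha(\Theta)}=1\}$ comes from Lemmas \ref{manifold} and \ref{boundary}, exactly as in your first paragraph. You instead prove the vanishing algebraically, from the anti-invariance in Lemma \ref{anti-invariance of J}: at a point fixed by $s_\alpha$ one gets $J(\Theta)=-J(\Theta)$. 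This is valid, and in fact the proof of Lemma \ref{anti-invariance of J} already establishes $w(J)=-J$ for the reflection in \emph{any} root (not just simple ones), so your worry about $(-1)^{s_\alpha}=-1$ for non-simple $\alpha$ is already taken care of there (your determinant argument is also fine). The one step you should make explicit is that every $\Theta$ with $e^{\alpha(\Theta)}=1$ really is fixed by $s_\alpha$ \emph{in $T$}, not just on the linear hyperplane $\alpha=0$ in $\mathfrak{h}$: this holds because $s_\alpha\Theta=\Theta-\alpha(\Theta)H_\alpha$ and, $G$ being simply connected, $2\pi i\,H_\alpha$ lies in the kernel of $\exp:\mathfrak{h}\to T$, so the correction term is trivial on $T$. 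With that noted, your argument is complete; it has the mild advantage of being independent of the inverse function theorem and of yielding the vanishing of $J$ on each wall $\{e^{\alpha(\Theta)}=1\}$ directly (which is what Lemma \ref{divisible} actually uses), while the paper's argument has the advantage of explaining \emph{why} the walls are forced to be the zero locus, namely that they map to the boundary of $P(G)$.
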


\begin{proof}
$T$ is a manifold without boundary, hence by the inverse function theorem, if $J(\Theta) \neq 0$, then $J(\Theta) \subset int(A)$. Thus, $J$ vanishes on $P^{-1}(\partial A)$. That $P^{-1}(\partial A)$ is the set claimed in the lemma follows from Lemmas \ref{boundary} and \ref{manifold}.
\end{proof}

\begin{lemma}
\label{divisible}
$J = Q*\prod_{\alpha \in R^+} (e^{\alpha/2} - e^{-\alpha/2})$ for some exponential polynomial $Q$.
\end{lemma}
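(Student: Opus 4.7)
The plan is to leverage the anti-invariance of $J$ (Lemma~\ref{anti-invariance of J}) together with unique factorization in the group ring $R := \mathbb{C}[L]$ of the weight lattice $L$. Since $G$ is simply connected, $L$ is $\mathbb{Z}$-spanned by the fundamental weights, so $R$ is a Laurent polynomial ring in $r$ variables over $\mathbb{C}$ and hence a UFD. Moreover $\rho \in L$, so the Weyl denominator
\begin{align*}
\Delta \,:=\, \prod_{\alpha\in R^+}(e^{\alpha/2}-e^{-\alpha/2}) \,=\, e^{-\rho}\prod_{\alpha\in R^+}(e^\alpha - 1)
\end{align*}
itself lies in $R$, and it suffices to prove that $\prod_{\alpha\in R^+}(e^\alpha - 1)$ divides $J$ inside $R$.

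The first step is to show that $e^\alpha - 1$ divides $J$ in $R$ for every positive root $\alpha$. Since every reflection in $W$ has determinant $-1$ and hence odd length, anti-invariance gives $s_\alpha(J) = -J$. Writing $J = \sum c_\mu e^\mu$, this forces $c_{s_\alpha\mu} = -c_\mu$; in particular $c_\mu = 0$ whenever $s_\alpha\mu = \mu$, and the remaining terms pair into two-element $s_\alpha$-orbits whose contributions take the form
\begin{align*}
c_\mu\bigl(e^\mu - e^{s_\alpha\mu}\bigr) \,=\, c_\mu\, e^\mu\bigl(1 - e^{-n\alpha}\bigr),\qquad n := \langle\mu,\alpha^\vee\rangle \in \mathbb{Z}.
\end{align*}
The elementary factorization $1 - x^n = (1-x)(1 + x + \cdots + x^{n-1})$, with the obvious modification for $n<0$, shows that $1 - e^{-n\alpha}$ is divisible in $R$ by $1 - e^{-\alpha}$, and hence by $e^\alpha - 1$ (which differs by the unit $-e^{-\alpha}$). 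Summing over the orbits yields $(e^\alpha-1)\mid J$ in $R$.

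Finally I conclude by unique factorization. Each positive root $\alpha$ is primitive in $L$, so it extends to a $\mathbb{Z}$-basis of $L$; in the induced Laurent coordinates $e^\alpha - 1$ becomes the linear polynomial $y - 1$, which is irreducible in $R$. For distinct positive roots the corresponding factors are pairwise non-associate, since the units of $R$ are the elements $c\,e^\lambda$ with $c \in \mathbb{C}^\times$ and $\lambda \in L$. By unique factorization, $\prod_{\alpha\in R^+}(e^\alpha-1) \mid J$ in $R$, so $J = \widetilde Q \cdot \prod_{\alpha\in R^+}(e^\alpha-1) = (\widetilde Q\, e^\rho)\,\Delta$ for some $\widetilde Q \in R$, and we take $Q := \widetilde Q\, e^\rho \in R$. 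The main subtlety is the orbit-pairing bookkeeping of the previous paragraph; the UFD step is routine once a lattice basis adapted to a given root is chosen, and the $e^{-\rho}$ in $\Delta$ is painlessly absorbed into $Q$ since $\rho \in L$.
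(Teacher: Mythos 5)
Your derivation of the divisibility of $J$ by each individual factor is a genuinely different route from the paper's: the paper obtains $(e^{\alpha}-1)\mid J$ from the geometric fact (Corollary \ref{J vanishes on boundary}, resting on Lemmas \ref{homeomorphism}--\ref{boundary} and the inverse function theorem) that $J$ vanishes on $\bigcup_{\alpha\in R^+}\{e^{\alpha}=1\}$, whereas you use only the anti-invariance from Lemma \ref{anti-invariance of J} and the classical pairing of $e^{\mu}$ with $e^{s_\alpha\mu}$. That part of your argument is correct and is arguably cleaner, since it bypasses the topological input entirely.

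There is, however, a genuine error in your final step: it is false that every positive root is primitive in the weight lattice $L$ of a simply connected group, so $e^{\alpha}-1$ need not be irreducible in $\mathbb{C}[L]$. Already for $G=SU(2)$ the unique positive root is $\alpha=2\omega$ with $\omega$ the fundamental weight, so $e^{\alpha}-1=(e^{\omega}-1)(e^{\omega}+1)$; the long roots of $Sp(4)$ give further examples. Once the factors are reducible, ``pairwise non-associate'' no longer implies that their product divides $J$ (compare $x^2-1$ and $x^3-1$ in $\mathbb{C}[x]$: non-associate, both can divide an element without their product doing so, because they share the factor $x-1$). What you actually need, and what is true, is pairwise \emph{coprimality}: writing $\alpha=m_\alpha\alpha_0$ with $\alpha_0$ primitive in $L$, the irreducible factors of $e^{\alpha}-1$ are the $e^{\alpha_0}-\zeta$ with $\zeta^{m_\alpha}=1$, each occurring exactly once (so $e^{\alpha}-1$ is squarefree); and since a reduced root system contains no proportional pair of distinct positive roots, distinct $\alpha,\beta\in R^+$ have non-proportional primitive parts, whence their irreducible factors are non-associate. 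With that repair the UFD argument does yield $\prod_{\alpha\in R^+}(e^{\alpha}-1)\mid J$, and the absorption of $e^{\rho}$ into $Q$ is fine since $\rho\in L$. So the gap is real but local to the coprimality step; the rest of the proof stands.
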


\begin{proof} For each simple root $\beta \in \Pi$, let $X_\beta := e^{\lambda_\beta}$. Then the ring of exponential polynomials is just the ring of Laurent polynomials $\mathbb{C}[X_\beta, X_\beta^{-1}: \beta \in \Pi]$. By the previous corollary, $J$ is an element of this ring such that setting $X_{\beta} = 1$ makes $J = 0$. Thus, $J$ is divisible by $X_\beta - 1$ for any simple root $\beta$. 
But since any root can be mapped to a simple root by an element of the Weyl group, anti-invariance of $J$ implies that $J$ is divisible by $e^{\alpha} - 1$ for each root $\alpha$. Now I claim that if $\alpha \neq \pm \beta$ are roots, then $e^{\alpha} - 1$ and $e^{\beta} - 1$ are pairwise coprime, from which it will follow that $J$ is divisible by $\prod_{\alpha \in R^+} (e^{\alpha} - 1) =   
e^{\rho} \prod_{\alpha \in R^+} (e^{\alpha/2} - e^{-\alpha/2})$, hence by $\prod_{\alpha \in R^+} (e^{\alpha/2} - e^{-\alpha/2})$, which will complete the proof. To see the claim, let $\alpha \neq \pm \beta$. The action of an element of the Weyl group induces an automorphism of the ring of exponential polynomials, and since any root may be mapped by the Weyl group to a simple root, we may assume that $\alpha$ is simple. But in that case, saying that $P(X_\gamma : \gamma \in \Pi)$ is relatively prime to $X_\alpha - 1$ says simply that setting $X_\alpha = 1$ doesn't make $P$ equal to $0$. But this is clear for $P = e^{\beta} - 1$, since $\beta$ is not a multiple of $\alpha$.
\end{proof}

We can now finish the proof of Proposition \ref{jacobian}. Let $Q$ be as in Lemma \ref{divisible}. Then since $J$ and $\prod_{\alpha \in R^+} (e^{\alpha/2} - e^{-\alpha/2})$ are both anti-invariant exponential polynomials, it follows that $Q$ is an invariant exponential polynomial. Since $deg(J) \leq deg(\prod_{\alpha \in R^+} (e^{\alpha/2} - e^{-\alpha/2})) = \rho$, it follows that $deg(Q) \leq 0$. Since $Q$ is invariant and any nonzero weight maps by an element of the Weyl group to a positive (even a dominant) weight, it follows that $Q$ has no terms $e^{\lambda}$ with $\lambda < 0$. Thus, $Q = c_0e^{0} = c_0$ for some constant $c_0$, that is, $Q$ is constant. So 
\begin{align*}
J = C \prod_{\alpha \in R^+} (e^{\alpha/2} - e^{-\alpha/2})
\end{align*}
for some constant $C$, and it remains to compute $|C|$.  We do this by comparing the leading coefficients of both sides of the above equation. By (\ref{eq:jacobfactor}), we get that the absolute value of the leading coefficient of $J$ is
\begin{align*}
|C| = 2^{-r_2} |det(\frac{\partial \lambda_\beta}{\partial \theta_i})_{\beta \in \Pi, 1 \leq i \leq r}|.
\end{align*}
I claim that the above determinant has absolute value one, from which it will follow that $|C| = 2^{-r_2}$, which will complete the proof of Proposition \ref{jacobian}. We check that the determinant is one in absolute value as follows. We have a linear map $\ell : T = (\mathbb{R}/2\pi\mathbb{Z})^r \rightarrow (\mathbb{R}i/2\pi i \mathbb{Z})^r$ given by $\ell(\Theta) := (\lambda_\beta(\Theta))_{\beta \in \Pi}$, where $\Theta := (\theta_1$, ..., $\theta_r)$. I claim that $\ell$ is an isomorphism, from which it follows that its determinant has absolute value one, and the proof will be complete. First, the linear forms $\lambda_\beta$, $\beta \in \Pi$, are linearly independent, hence $\ell$ is surjective. Now we check that $\ell$ is injective. Suppose that $\ell(\Theta) \in (2\pi i \mathbb{Z})^r$. Then in particular, $e^{\beta(\Theta)} = 1$ for every weight $\beta$ of $G$. But then $\chi(\Theta) = \chi(0)$ for every character $\chi$ of $G$ (Here $0$ denotes the identity of $T$ = the identity of $G$.). Therefore, by Peter-Weyl, $\Theta$ is conjugate in $G$ to $0$, hence $\Theta = 0$. Thus, $\ell$ is injective, hence is an isomorphism, which completes the proof.
\hfill $\blacksquare$
\\

\noindent We now immediately obtain the main result of this section.

\begin{proposition}
\label{pushforward}
Let $r$ be the rank of $G$, $r_2$ the number of complex conjugate pairs of nonreal fundamental characters (equivalently, the number of dual pairs of non-self-dual representations). The pushforward of normalized Haar measure by the map $P: G\rightarrow \mathbb{R}^r$ given by equation (\ref{eq:pushmap}) is given by $\mu = FdX$, where $dX = dx_1...dx_r$ is Lebesgue measure on $\mathbb{R}^r$, and
\begin{align*}
F(x) = 
\begin{cases}
\frac{2^{r_2}}{(2\pi)^r} \left| \prod_{\alpha \in R^+} (e^{\alpha/2} - e^{-\alpha/2}) \right|, & x\in P(G)\\
0, & x\notin P(G).\\
\end{cases}
\end{align*}
Furthermore, $F$ is a continuous function on $\mathbb{R}^r$.
\end{proposition}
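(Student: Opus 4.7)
The plan is to chain together the Weyl integration formula (Lemma \ref{weylintegrationformula}), the change of variables formula \eqref{eq:measure1}, and Proposition \ref{jacobian} to read off $F$ in closed form, and then combine Lemma \ref{homeomorphism} with Corollary \ref{J vanishes on boundary} to upgrade the result to a genuinely continuous function on all of $\mathbb{R}^r$.

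First, I would substitute the identity $|J| = 2^{-r_2}\bigl|\prod_{\alpha \in R^+}(e^{\alpha/2} - e^{-\alpha/2})\bigr|$ from Proposition \ref{jacobian} directly into the density expression \eqref{eq:measure1}. One copy of the Weyl denominator in the numerator cancels against the one supplied by $|J|$ in the denominator, and the factor of $2^{-r_2}$ inverts, immediately giving $F(x) = \frac{2^{r_2}}{(2\pi)^r}\bigl|\prod_{\alpha \in R^+}(e^{\alpha/2} - e^{-\alpha/2})\bigr|$ on $P(G)$. The Weyl denominator is anti-invariant under $W$, so its absolute value is $W$-invariant and thus descends to a continuous function on $T/W$; via the homeomorphism $P: T/W \to P(G)$ of Lemma \ref{homeomorphism}, this defines $F$ unambiguously and continuously on $P(G)$.

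The remaining content is to show that $F$, extended by zero off $P(G)$, is continuous on $\mathbb{R}^r$, which by compactness of $P(G)$ reduces to checking $F \equiv 0$ on $\partial P(G)$. By Lemmas \ref{manifold} and \ref{boundary}, $P^{-1}(\partial P(G)) = \bigcup_{\alpha \in R^+} \{\Theta : e^{\alpha(\Theta)} = 1\}$. If $e^{\alpha(\Theta)} = 1$ for some $\alpha \in R^+$, then $e^{\alpha(\Theta)/2} \in \{\pm 1\}$, and in either case $e^{\alpha(\Theta)/2} - e^{-\alpha(\Theta)/2} = 0$, so the corresponding factor of the Weyl denominator vanishes. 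Hence $|\prod_{\alpha \in R^+}(e^{\alpha/2} - e^{-\alpha/2})|$ vanishes on $P^{-1}(\partial P(G))$, which through the homeomorphism translates into $F$ vanishing on $\partial P(G)$, completing continuity on $\mathbb{R}^r$.

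The main obstacle was really the Jacobian computation that has already been carried out in Proposition \ref{jacobian}; modulo that result, the present proposition amounts to straightforward bookkeeping combined with the boundary-preimage identification from Lemmas \ref{manifold}–\ref{boundary}. The only point that requires a moment's attention is that the Weyl denominator and $J$ vanish on the same locus (not merely that $|J|$ is a multiple of the Weyl denominator), but this is automatic from the proportionality in Proposition \ref{jacobian}.
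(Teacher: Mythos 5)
Your proposal is correct and follows essentially the same route as the paper: read off the density from the Weyl integration formula \eqref{eq:measure1} together with the Jacobian formula of Proposition \ref{jacobian}, use the homeomorphism of Lemma \ref{homeomorphism} for continuity on $P(G)$, and use Lemmas \ref{manifold} and \ref{boundary} to reduce continuity on all of $\mathbb{R}^r$ to the vanishing of the Weyl denominator on $\bigcup_{\alpha \in R^+}\{e^{\alpha}=1\}$. Your explicit check that $e^{\alpha(\Theta)}=1$ forces $e^{\alpha/2}-e^{-\alpha/2}=0$ is a detail the paper leaves implicit, but the argument is the same.
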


\begin{proof}
That $F$ is given by the claimed formula is immediate from (\ref{eq:measure1}) and Proposition \ref{jacobian}. All that remains is to check that $F$ is continuous.
As usual, let $A = P(G)$. Since the expression for $F(x)$ for $x\in A$ is a continuous 
function of $P^{-1}(x)$, and since $P: T/W \rightarrow A$ is a homeomorphism (Lemma 
\ref{homeomorphism}), it follows that $F|_A$ is continuous. $F$ is clearly continuous on $ext(A)$, 
so it remains to show that $F$ is continuous on $\partial A$, or equivalently, that $F|_{\partial A} = 
0$. Now $\partial A = \cup_{\alpha \in R^+} \{e^\alpha = 1\}$ by Lemmas \ref{boundary} and \ref{manifold}, and the formula for $F|_A$ vanishes at these points. Thus, $F|_{\partial A} = 0$, so $F$ is continuous.
\end{proof}

Although we will not use it, we record the following interesting consequence of the results of this section.

\begin{corollary}
\label{homdiff}
$P: T/W \rightarrow P(G)$ is a homeomorphism and $P: int(T/W) \rightarrow int(P(G))$ is a diffeomorphism.
\end{corollary}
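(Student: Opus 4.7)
The homeomorphism claim is already Lemma \ref{homeomorphism}, so the only new content is the diffeomorphism statement on interiors. My plan is to combine three ingredients already in hand: (i) the interior of $T/W$ carries a natural smooth structure because the Weyl group acts freely on the preimage of $\mathrm{int}(T/W)$ in $T$; (ii) by Lemma \ref{boundary}, the homeomorphism $P\colon T/W \to P(G)$ restricts to a bijection $\mathrm{int}(T/W)\to\mathrm{int}(P(G))$; and (iii) by Proposition \ref{jacobian}, the Jacobian $|J| = 2^{-r_2}\bigl|\prod_{\alpha\in R^+}(e^{\alpha/2}-e^{-\alpha/2})\bigr|$ is nonzero on $\mathrm{int}(T/W)$. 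Combining (i)--(iii) with the inverse function theorem will give the conclusion.

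First I would clarify the smooth structure on $\mathrm{int}(T/W)$. By Lemma \ref{manifold}, $\mathrm{int}(T/W)$ is the image in $T/W$ of $\{\Theta \in T : e^{\alpha(\Theta)}\neq 1\ \text{for all}\ \alpha\in R^+\}$, i.e.\ the image of the interiors of the Weyl chambers. The Weyl group acts freely on these interiors, so the quotient map is a local diffeomorphism there, and $\mathrm{int}(T/W)$ inherits a smooth manifold structure with respect to which $P$, being a polynomial in smooth character functions on $T$, is a smooth map into $\mathbb{R}^r$.

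Next I would show $P\colon \mathrm{int}(T/W)\to \mathrm{int}(P(G))$ is a local diffeomorphism. The derivative of $P$, read on $T$ in angular coordinates, has determinant $J(\Theta)$. By Proposition \ref{jacobian}, $|J(\Theta)| = 2^{-r_2}\bigl|\prod_{\alpha\in R^+}(e^{\alpha/2}-e^{-\alpha/2})\bigr|$, and this product vanishes exactly when $e^{\alpha(\Theta)}=1$ for some positive root $\alpha$, i.e.\ (again by Lemma \ref{manifold}) exactly on $\partial(T/W)$. Hence $J$ is nonvanishing on $\mathrm{int}(T/W)$, so the inverse function theorem yields that $P$ is a local diffeomorphism there.

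Finally, by Lemma \ref{homeomorphism} and Lemma \ref{boundary}, $P$ restricts to a bijection $\mathrm{int}(T/W)\to\mathrm{int}(P(G))$. A bijective local diffeomorphism is a diffeomorphism, finishing the proof. There is no real obstacle here; the only point requiring care is checking that the target $\mathrm{int}(P(G))$ is itself a smooth $r$-manifold, which follows because $P$ is already known to be a local diffeomorphism onto its image and the image is an open subset of $\mathbb{R}^r$ (being the complement in $P(G)$ of $\partial P(G)$, with $P(G)$ having nonempty interior in $\mathbb{R}^r$ by the nonvanishing of $J$).
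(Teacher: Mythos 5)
Your proposal is correct and follows essentially the same route as the paper: the paper's proof simply cites Lemma \ref{homeomorphism} for the first claim and deduces the second from the Inverse Function Theorem together with the Jacobian formula of Proposition \ref{jacobian}, which vanishes exactly on $\partial(T/W)$. Your write-up merely fills in details the paper leaves implicit (the smooth structure on $int(T/W)$ via the free Weyl action off the root hyperplanes, and the bijectivity needed to upgrade a local diffeomorphism to a global one).
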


\begin{proof}
The first assertion is Lemma \ref{homeomorphism}. The second is a consequence of the Inverse Function Theorem and our formula for the Jacobian of $P$, which shows that it vanishes only at $\partial (T/W) = \cup_{\alpha \in R^+} \{e^{\alpha} = 1\}$.
\end{proof}

\section{Proof of The Main Theorem}

We note that it suffices to prove the assertion for $k$ odd (with a smaller constant, say $\frac{1}{2}C_G$ instead of $C_G$), and that we may work with $D^*$ instead of $D$ by using inequality (\ref{eq:ineq3}). We work with the pushforward sequence $a = \{a_i = P(g_i)\}_{i = 1}^N$ in $[-M, M]^r$. Our proof of Theorem \ref{maintheorem} involves two major linchpins. One is representing the moments of $a$ by certain integral operators, and the other involves constructing a suitable kernel. It is to the first task that we now turn. We require a couple of preparatory lemmas.

\begin{lemma}
\label{measuredifferentiable}
Let $H: [-M, M]^m \rightarrow \mathbb{R}$ be continuous, and consider the measure $\lambda = HdX$, where $dX$ is Lebesgue measure. For $x \in [-M, M]^m$, let $I_x := \prod_{i = 1}^m [-M, x_i]$, and for $J \subset \{1, ..., m\}$, define, $x_J \in \prod_{i \in J} [-M, M]$ to be the projection of $x$ onto the $J$ coordinates, that is, $(x_J)_i = x_i$ for $i \in J$.
Then for every set $J\subset \{1, ..., m\}$, 
\begin{align*}
\frac{\partial \lambda(I_x)}{\partial X_J}(x) = \int_{\prod_{i \notin J} [-M, x_i]} H(x_J, y)dy_{J^c},
\end{align*}
where $\frac{\partial}{\partial X_J} = \prod_{i \in J} \frac{\partial}{\partial x_i}$ and $dy_{J^c} = \prod_{i \notin J} dy_i$ is Lebesgue measure on $\prod_{i \notin J} [-M, M]$. In particular, $\frac{\partial \lambda(I_x)}{\partial X_J}$ exists and is continuous.
\end{lemma}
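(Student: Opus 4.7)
The plan is to use Fubini's theorem to separate the $J$-variables from the $J^c$-variables, and then to apply the fundamental theorem of calculus once for each coordinate in $J$. Define
\begin{align*}
G(t_J, x_{J^c}) := \int_{\prod_{i \notin J}[-M, x_i]} H(t_J, y)\, dy_{J^c}.
\end{align*}
Since $H$ is bounded and continuous on the compact cube, Fubini gives $\lambda(I_x) = \int_{\prod_{i \in J}[-M, x_i]} G(t_J, x_{J^c})\, dt_J$, and the goal is to show that $|J|$ successive partial derivatives in the $J$-coordinates recover $G(x_J, x_{J^c})$.

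The only technical point, and essentially the main obstacle, is the joint continuity of $G$ in all of its arguments. This follows from the uniform continuity of $H$ on $[-M, M]^m$: perturbing $t_J$ changes $H(t_J, \cdot)$ by a uniformly small amount, while perturbing $x_{J^c}$ changes the region of integration by a set of small Lebesgue measure, over which the bounded $H$ contributes negligibly. A routine $\varepsilon/\delta$ argument combines these two observations. The same reasoning applies to any function obtained from $G$ by holding some of the $J^c$-coordinates fixed at their ``upper limits'' and integrating over the rest.

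With this continuity in hand I would proceed by induction on $|J|$. Enumerate $J = \{i_1, \ldots, i_k\}$ in any order, peel off the $t_{i_1}$ integration by Fubini, and apply the fundamental theorem of calculus (valid because the resulting inner integral is continuous in $t_{i_1}$) to obtain
\begin{align*}
\frac{\partial \lambda(I_x)}{\partial x_{i_1}} = \int_{\prod_{i \in J \setminus \{i_1\}}[-M, x_i]} \int_{\prod_{i \notin J}[-M, x_i]} H\bigl((x_{i_1}, t_{J \setminus \{i_1\}}), y\bigr)\, dy_{J^c}\, dt_{J \setminus \{i_1\}},
\end{align*}
which is the same type of expression with $J$ replaced by $J \setminus \{i_1\}$ and $H(\cdot)$ replaced by $H(x_{i_1}, \cdot)$. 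Iterating $|J|$ times produces the asserted formula, and continuity of the final expression, which gives the ``in particular'' clause, follows from the same uniform-continuity argument. Since the iterated partials are continuous at each stage, the order of differentiation is immaterial.
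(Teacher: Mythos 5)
Your proposal is correct and follows essentially the same route as the paper: Fubini's theorem to isolate one coordinate at a time, the fundamental theorem of calculus (i.e.\ the difference quotient limit) for the continuous inner integral, and uniform continuity of $H$ on the compact cube to justify both the continuity of the partial integrals and the passage to the limit. The paper organizes this as an induction on $|J|$ adding one index per step, whereas you peel coordinates off the full Fubini decomposition, but the content is identical.
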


\begin{proof}
We proceed by induction on $|J|$, the case $|J| = 0$ being trivial. Now suppose that $J \neq \phi$, say $j \in J$, and that the lemma is true for $J - \{j\}$. We will prove the lemma for $J$. We have by the induction hypothesis that 
\begin{align}
\frac{\partial \lambda(I_x)}{\partial X_{J-\{j\}}} = \int_{\prod_{i\notin J - \{j\}} [-M, x_i]} H(x_{J - \{j\}}, y)dy_{(J-\{j\})^c}.
\label{eq:4.1}
\end{align}
By Fubini's Theorem, the integral on the right side of (\ref{eq:4.1}) can be written as 
\begin{align*}
\int_{\prod_{i\notin J} [-M, x_i]} \int_{-M}^{x_j}H(x_{J- \{j\}}, y_j, y_{J^c})dy_jdy_{J^c}.
\end{align*}
Thus, taking $\frac{\partial}{\partial x_j}$ of both sides of (\ref{eq:4.1}) yields
\begin{align*}
\frac{\partial \lambda(I_x)}{\partial X_J} = \lim_{\epsilon \rightarrow 0} \int_{\prod_{i\notin J} [-M, x_i]} \frac{1}{\epsilon} 
\int_{x_j}^{x_j + \epsilon} H(x_{J- \{j\}}, y_j, y_{J^c})dy_jdy_{J^c} = \int_{\prod_{i\notin J} [-M, x_i]} H(x_J, y)dy_{J^c},
\end{align*}
where the last equality comes from the uniform continuity of $H$. Thus, the lemma is true for $J$, 
which completes the induction.
\end{proof}

Now we need some notation. For $x = (x_i)\in [-M, M]^m$, let $I_x := \prod_{i = 1}^m {[-M, x_i]}$. Let $J\subset \{1,..., m\}$.
For a smooth function $h:[-M, M]^m\rightarrow \mathbb{C}$, define a function $h_J: \prod_{k\in J} {[-M, M]} \rightarrow \mathbb{C}$ as follows: For $x = (x_k)\in \prod_{k\in J} {[-M, M]}$, define $x_J \in [-M, M]^m$ by
\[
(x_J)_k = 
\begin{cases}
x_k & \mbox{if } k\in J \\
M & \mbox{if } k\notin J \\
\end{cases}
.
\]
Then define $h_J(x) := h(x_J)$. Now suppose that we have a measure $\lambda = HdX$ on $[-M, M]^m$, where $H$ is a continuous function and $dX$ is Lebesgue measure. Define a measure $\lambda_J$ on $\prod_{k\in J} {[-M, M]}$ by $
\lambda_J = \frac{\partial}{\partial X_J} (\lambda(I_{x_J})) dX_J$, where $\frac{\partial}{\partial X_J}
$ is shorthand for $\prod_{k\in J} {\frac{\partial}{\partial x_k}}$ and $dX_J$ denotes Lebesgue 
measure on $\prod_{k\in J} {[-M, M]}$. This makes sense by Lemma \ref{measuredifferentiable}.
Finally, define a function $R^{\lambda}_J: \prod_{k \in J} {[-M, M]} \rightarrow \mathbb{R}$ by
\[
R^{\lambda}_J(x) := \frac{1}{N}|i: a_{iJ} \in I_{x_J}| - \lambda_J((I_J)_{x}).
\]
In the above expression, $(I_J)_x$ denotes $\prod_{i \in J} [-M, x_i]$. Note that $\lambda_J((I_J)_x) = \lambda(I_{x_J})$, and that
the above expression for $R^{\lambda}_J$ is indeed equal to $R^{\lambda}_J$ in the previously defined sense if we take $R^{\lambda} := R^{\lambda}_{\{1,..., m\}}$.

\begin{lemma} 
\label{integralopsgeneral}
Let $H: [-M, M]^m \rightarrow \mathbb{R}$ be a continuous function, $\lambda = HdX$, where $dX$ is Lebesgue measure on $[-M, M]^m$. For any smooth function $h: [-M, M]^m \rightarrow \mathbb{C}$, 
\[
\sum_{\phi \neq J\subset \{1,..., m\}} (-1)^{|J|}\int_{\prod_{k\in J} {[-M, M]}} {R^{\lambda}_J(x)\frac{\partial h_J}{\partial x_J}dX_J} = 
 \frac{1}{N}\sum_{i = 1}^N {h(a_i)} - \int_{[-M, M]^m} {hd\lambda} .
\]
\end{lemma}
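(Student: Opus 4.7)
The plan is to establish a multivariable fundamental-theorem-of-calculus expansion of $h$, and then to apply it both to the empirical measure $\tfrac{1}{N}\sum_i \delta_{a_i}$ and to the measure $\lambda$, subtracting the results. The key identity I would first prove is that for any smooth $h: [-M,M]^m \to \mathbb{C}$ and any $a \in [-M,M]^m$,
\begin{align*}
h(a) = h(M,\ldots,M) + \sum_{\emptyset \neq J \subset \{1,\ldots,m\}} (-1)^{|J|} \int_{\prod_{k\in J}[a_k,M]} \frac{\partial h_J}{\partial x_J}(x) \, dx_J.
\end{align*}
This is straightforward by induction on $m$: the case $m=1$ is just $h(a) = h(M) - \int_a^M h'(x)\,dx$, and in the inductive step one writes $h(a_1,\ldots,a_m) = h(a_1,\ldots,a_{m-1}, M) - \int_{a_m}^M \partial_m h(a_1,\ldots,a_{m-1},x_m)\,dx_m$ and expands each term by the inductive hypothesis, with subsets $J \subset \{1,\ldots,m\}$ splitting naturally according to whether $m \in J$.

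Next, I would rewrite each integral $\int_{\prod_{k\in J}[a_k,M]}(\cdots)$ as $\int_{\prod_{k\in J}[-M,M]} \mathbf{1}_{\{x_J \geq a_J\}}(\cdots)$, and note that $x_J \geq a_J$ coordinatewise is equivalent to $a_{J} \in (I_J)_x$. Applying the pointwise identity at each $a_i$ and averaging yields
\begin{align*}
\frac{1}{N}\sum_{i=1}^N h(a_i) = h(M,\ldots,M) + \sum_{\emptyset \neq J}(-1)^{|J|} \int_{\prod_{k\in J}[-M,M]} \frac{1}{N}|\{i : a_{iJ} \in (I_J)_x\}| \, \frac{\partial h_J}{\partial x_J} \, dX_J.
\end{align*}
Applying the same identity at a general $y$ and integrating against $\lambda(dy)$, Fubini swaps the order of integration to produce the analogous formula for $\int h\,d\lambda$ but with the empirical density replaced by $\lambda(I_{x_J}) = \lambda_J((I_J)_x)$; Lemma \ref{measuredifferentiable} ensures $\lambda_J$ is the measure with continuous density $\partial \lambda(I_{x_J})/\partial X_J$, as required.

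Subtracting the two displays, the constant terms involving $h(M,\ldots,M)$ cancel (here one uses that $\lambda$ has total mass one, as is the case in the intended application where $\lambda = \mu$ is a probability measure), and the $J$-sum integrands combine into exactly $R^\lambda_J \cdot \partial h_J/\partial x_J$, giving the claim. The main obstacle I anticipate is purely combinatorial bookkeeping in the inductive proof of the pointwise FTC identity --- keeping signs and the splitting $J \ni m$ versus $J \not\ni m$ straight --- since the analytic ingredients (iterated FTC, Fubini, continuity of $H$) are all standard.
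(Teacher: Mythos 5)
Your proof is correct, but it takes a genuinely different route from the paper's. The paper proceeds by induction on $m$ on the identity itself: it starts from the top-dimensional integral $\int R^{\lambda}_{J_m}\,\frac{\partial^m h}{\partial X_{J_m}}$, evaluates the empirical part by iterated use of the Fundamental Theorem of Calculus, integrates by parts once in each variable to move the derivatives from $h$ onto $\lambda(I_x)$, and then applies the inductive hypothesis to every proper subset $J$, collapsing the coefficients via the binomial cancellation $\sum_j (-1)^j\binom{m-i}{j}$. You instead prove a single pointwise inclusion--exclusion/FTC expansion of $h(a)$ anchored at the corner $(M,\ldots,M)$, and then obtain both sides of the lemma by applying the two measures (the empirical measure and $\lambda$) to that identity, using Fubini on the $\lambda$ side; the subtraction assembles $R^{\lambda}_J$ directly. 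Your route is more elementary and transparent: the only induction is on a standard pointwise identity, the integration by parts against $\lambda(I_x)$ is avoided (you need only the observation, noted in the paper, that $\lambda_J((I_J)_x)=\lambda(I_{x_J})$, which follows from Lemma \ref{measuredifferentiable}), and the combinatorics reduce to the routine $J\ni m$ versus $J\not\ni m$ splitting. One substantive point in your favor: you correctly observe that the cancellation of the $h(M,\ldots,M)$ terms requires $\lambda([-M,M]^m)=1$. That hypothesis is in fact necessary --- take $h$ constant to see the identity fails for general $H$ --- and the paper's proof uses it silently: its ``trivial'' base case, equivalently the $J=\emptyset$ term of (\ref{eq:int2}), equals $h(M,\ldots,M)\bigl(1-\lambda([-M,M]^m)\bigr)$. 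Since the lemma is only ever applied with $\lambda=\mu$ a probability measure this is harmless, but your explicit flagging of the normalization is correct and the lemma's statement should include it.
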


\begin{proof} We may assume that $h$ is real-valued, by considering its real and imaginary parts. Note that the $J = \phi$ term in the sum is $0$, so we may add it in without modifying anything. We proceed by induction on $m$. In the $m = 0$ case the assertion is trivial. Now suppose the assertion is true for values less than $m$ and we'll prove it for $m$. For convenience denote $\{1,..., m\}$ by $J_m$. Also, for $y = (y_k) \in [-M, M]^m$ let $B_y$ denote the box $\prod_{k = 1}^m {[y_k, M]}$. We have
\begin{align}
\int_{[-M, M]^m} {R^{\lambda}_{J_m}\frac{\partial h}{\partial X_{J_m}}dX_{J_m}} = 
\frac{1}{N}\sum_{i = 1}^N {\int_{B_{a_i}} {\frac{\partial}{\partial x_1}...\frac{\partial}{\partial x_m} h dx_1...dx_m}} - \int_{[-M, M]^m} {\lambda(I_x) \frac{\partial}{\partial x_1}...\frac{\partial}{\partial x_m} h dx_1...dx_m}.  \label{eq:int1}
\end{align}
We use Fubini's Theorem to write each integral in the first sum as an iterated integral over the $x_i$ and then use the Fundamental Theorem of Calculus repeatedly to evaluate it. We obtain
\[
\int_{B_{a_i}} {\frac{\partial}{\partial x_1}...\frac{\partial}{\partial x_m} h dx_1...dx_n} = 
\sum_{J\subset \{1,..., m\}} {(-1)^{|J|}h_J((a_i)_J)}.
\]
We evaluate the last integral on the right side of (\ref{eq:int1}) by integrating by parts once in each variable in order to move all of the derivatives from $h$ to $\lambda(I_x)$. This is permissible by Lemma \ref{measuredifferentiable}. At each step we get a boundary term and a term in which the derivative has been moved. The boundary term on the left (that is, the boundary term at -M) is $0$, since it's the $\lambda_J$ measure of a box of the form $\prod_{k \in A} [d_k, e_k] \times \{-M\}$ for some set $A$, which is $0$. Thus, if we let $J$ be the set of variables for which we move the derivative (i.e., for which we take the term coming from integration by parts in $x_j$ that is not the boundary term), then we see that the last integral in (\ref{eq:int1}) equals
\[
\sum_{J\subset \{1,..., m\}} (-1)^{|J|}\int_{\prod_{k\in J} [-M, M]} {h_J\frac{\partial \lambda(I_x)}{\partial X_J}}.
\]
Thus, putting everything together, we obtain
\begin{align}
\int_{[-M, M]^m} {R^{\lambda}_{J_m}\frac{\partial h}{\partial X_{J_m}}dX_{J_m}} =
\sum_{J\subset \{1,..., m\}} {(-1)^{|J|} \left[ \frac{1}{N} \sum_{i = 1}^N {h_J((a_i)_J)} - \int_{\prod_{k \in J} [-M, M]} {h_J\frac{\partial \lambda(I_x)}{\partial X_J}} \right] }.
\label{eq:int2}
\end{align}
Now we apply the induction hypothesis to the terms of the sum on the right side of (\ref{eq:int2}). To do this we first note that if $I\subset J\subset \{1,..., m\}$, then $(h_J)_I = h_I$ and $(\lambda_J)_I = \lambda_I$. Using this and the induction hypothesis we get that for $J\subsetneq \{1,..., m\}$, the $J$ summand on the right side of (\ref{eq:int2}) equals
\[
(-1)^{|J|}\sum_{I\subset J} (-1)^{|I|}{\int_{\prod_{k \in I} [-M, M]} {R^{\lambda}_I\frac{\partial h_I}{\partial x_I}dX_I}}.
\]
Thus, for $I\neq \{1,..., n\}$, the coefficient of $\int_{\prod_{k \in I} [-M, M]} {R^{\lambda}_I\frac{\partial h_I}{\partial x_I}dX_I}$ on the right side of (\ref{eq:int2}) equals
\[
(-1)^{|I|}\sum_{I\subset J\subsetneq\{1,..., m\}} {(-1)^{|J|}}.
\]
Grouping the terms in the above sum by $j = |J|$, and letting $i := |I|$, we see that the sum equals
\[
(-1)^i\sum_{j = i}^{m-1} {(-1)^j{m-i \choose j-i}} = \sum_{j = 0}^{m-1-i} {(-1)^j{m-i \choose j}} = (1-1)^{m-i} - (-1)^{m-i} = -(-1)^{m-i}.
\]
Therefore, rearranging (\ref{eq:int2}) gives (using the fact that, by Lemma \ref{measuredifferentiable}, $\frac{\partial^m \lambda(I_x)}{\partial x_1...\partial x_m} = H$)
\[
(-1)^m \left[ \frac{1}{N}\sum_{i = 1}^N {h(a_i)} - \int_{[-M, M]^n} {hd\lambda} \right] =
 \sum_{J\subset \{1,..., m\}} {(-1)^{m - |J|}\int_{\prod_{k \in I} [-M, M]} {R^{\lambda}_I\frac{\partial h_I}{\partial x_I}dX_I}},
\]
and multiplying by $(-1)^m$, we see that the lemma is true for $m$, which completes the induction. 
\end{proof}

Taking m = r and $\lambda = \mu$, the pushforward measure from sections 2 and 3, in Lemma \ref{integralopsgeneral}, and using Proposition \ref{pushforward}, we obtain the following.

\begin{corollary}
\label{corollary}
Let $\mu = P_*(dg)$ be the pushforward of Haar measure in $[-M, M]^r$ (See section 3.). Then For any smooth function $h: [-M, M]^r \rightarrow \mathbb{C}$, 
\[
(-1)^r\int_{[-M, M]^r} {R^{\mu}(x)  \frac{\partial^rh}{\partial x_1...\partial x_r}dX} = 
 \frac{1}{N}\sum_{i = 1}^N {h(a_i)} - \int_{[-M, M]^r} {hd\mu} .
\]
\end{corollary}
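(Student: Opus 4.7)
The plan is to apply Lemma \ref{integralopsgeneral} directly, with $m = r$ and $\lambda = \mu$. To set up the application, I would first verify the continuity hypothesis of the lemma by invoking Proposition \ref{pushforward}, which provides $\mu = F\, dX$ with $F: \mathbb{R}^r \to \mathbb{R}$ continuous.

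With the hypothesis in hand, Lemma \ref{integralopsgeneral} gives
\[
\sum_{\emptyset \neq J \subseteq \{1,\ldots,r\}} (-1)^{|J|} \int_{\prod_{k\in J}[-M,M]} R^{\mu}_J(x)\, \frac{\partial h_J}{\partial X_J}\, dX_J \;=\; \frac{1}{N}\sum_{i=1}^N h(a_i) - \int_{[-M,M]^r} h\, d\mu.
\]
The top summand (indexed by $J = \{1,\ldots,r\}$) is exactly $(-1)^r \int_{[-M,M]^r} R^{\mu}(x)\, \frac{\partial^r h}{\partial x_1 \cdots \partial x_r}\, dX$, by the conventions $R^{\mu} := R^{\mu}_{\{1,\ldots,r\}}$ and $h_{\{1,\ldots,r\}} = h$. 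This is the left-hand side of the corollary.

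The remaining point, and the only nontrivial step, is to explain why the lower-dimensional summands (those indexed by proper nonempty subsets $J$) contribute nothing. The cleanest route is to observe that $h_J(x) = h(x_J)$ substitutes the value $M$ for every coordinate $k \notin J$; hence if the test function $h$ vanishes on the upper faces $\{x_k = M\}$ of the box (as the kernel constructed in the proof of Theorem \ref{maintheorem} will), then $h_J \equiv 0$ for each proper $J$ and the corresponding terms drop out. This is the one place where care must be taken, and I expect it to be the main obstacle: confirming that the intended use of the corollary in Section 4 is always to test functions with the correct boundary behavior. Once that is granted, the corollary follows immediately from Lemma \ref{integralopsgeneral}.
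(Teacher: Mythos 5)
Your approach is the same as the paper's --- the paper's entire proof of this corollary is the single sentence ``Taking $m=r$ and $\lambda=\mu$ in Lemma \ref{integralopsgeneral}, and using Proposition \ref{pushforward}, we obtain the following'' --- but you have correctly identified something the paper's proof silently skips over: Lemma \ref{integralopsgeneral} produces a sum over \emph{all} nonempty $J\subset\{1,\ldots,r\}$, and the terms with $J\subsetneq\{1,\ldots,r\}$ do not vanish for an arbitrary smooth $h$. Indeed, $R^{\mu}_J(x)=R^{\mu}(x_J)$ is a genuine lower-dimensional discrepancy and $h_J(x)=h(x_J)$ is generally nonzero. The statement as written is in fact false for $r\geq 2$: take $r=2$ and $h(x_1,x_2)=x_1$, so that $\partial^2 h/\partial x_1\partial x_2=0$ and the left-hand side is $0$, while the right-hand side is $\frac{1}{N}\sum_i (a_i)_1-\int x_1\,d\mu$, which is nonzero for generic sequences. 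So the ``main obstacle'' you flag is real, and it is a defect of the corollary's hypotheses, not of your argument.

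Your proposed repair --- adding the hypothesis that $h_J\equiv 0$ for every proper $J$, i.e.\ that $h$ vanishes whenever some coordinate equals $M$ --- is exactly right, and it is exactly what the paper arranges in Section 4: the antiderivative $h$ of the kernel $f_v$ is built via Lemma \ref{antiderivative} precisely so that property (ii) there kills the lower-order terms (this is the justification given for equation (\ref{eq:15})). One further point worth noting: the corollary is also cited later for $h=\chi$ an irreducible character, and characters certainly do not vanish on the upper faces of the box; what is actually being invoked at that step is the full identity of Lemma \ref{integralopsgeneral} (with the complete sum over $J$ on the left), not the corollary as stated. So the correct reading is that the corollary should either be restated with your extra hypothesis on $h$, or restated as a verbatim specialization of Lemma \ref{integralopsgeneral} retaining the full sum over $J$; either version supports the use made of it in the proof of Theorem \ref{maintheorem}.
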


Now we turn to our second main task, the construction of a suitable kernel. For this we make use of the Chebyshev polynomials of the first kind (suitably normalized for our purposes) defined by
\[
T_k(x) := cos(k\,arccos(\frac{x}{2M\sqrt{r}}))\; for\; x\in [-2M\sqrt{r}, 2M\sqrt{r}].
\]
Then $T_k$ is a polynomial of degree $k$. Now define as usual for $x\in \mathbb{R}^r$, $|x| := 
\sqrt{\sum_{i =1}^r {x_i^2}}$. Then we will make use of the following polynomials on $\mathbb{R}^r
$ (whose crucial property is that they're heavily concentrated at the origin.):
\[
f_k(x) := (\frac{T_k(|x|)}{|x|})^{2(1 + \lfloor \frac{r}{2} \rfloor)} \; for\; |x| \leq 2M\sqrt{r}.
\]
Since $k$ is odd, $T_k(0) = 0$, and $f_k$ is an even polynomials in $|x|$, hence is a polynomial in $|x|^2$ of degree $(1 + \lfloor \frac{r}{2} \rfloor) (k-1)$, hence is indeed a polynomial in the $x_i$ of degree $2(1 + \lfloor \frac{r}{2} \rfloor)(k-1)$. It is manifestly centrally symmetric and nonegative. The reason for the $2M\sqrt{r}$ normalization in our definition of $T_k$ is to ensure that for any $v \in [-M, M]^r$, $f_k(x-v)$ is defined on all of $[-M, M]^r$. The following lemma is a quantitative version of the statement that $f_k$ is concentrated at the origin.

\begin{lemma}
\label{kernel}
$f_k$ satisfies the following properties.

\noindent (i) For any $0 < c \leq \frac{6}{5}M\sqrt{r}$, 
\[
(\frac{1}{5})^{2(1+ \lfloor \frac{r}{2} \rfloor)}  \frac{c^rk^{2(1 + \lfloor \frac{r}{2}\rfloor) - r}vol(S^{r-1})}{r(M\sqrt{r})^{2(1+\lfloor \frac{r}{2}\rfloor)}} \leq
\int_{|x| \leq ck^{-1}} {f_kdx}     \leq \frac{c^rk^{2(1 + \lfloor \frac{r}{2}\rfloor) - r}vol(S^{r-1})}{r(M\sqrt{r})^{2(1+\lfloor \frac{r}{2}\rfloor)}},
\]
where $S^{r-1}$ is the $(r-1)$-sphere.

\noindent (ii) For any $0 < t \leq 2M\sqrt{r}$, $\int_{t \leq |x| \leq 2M\sqrt{r}} {f_kdx} \leq 2 vol(S^{r-1})t^{r - 2(1 + \lfloor \frac{r}{2} \rfloor)}$.
\end{lemma}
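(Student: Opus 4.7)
The plan is to pass to polar coordinates and reduce both statements to one-variable estimates on the radial integrand. Setting $m := 2(1 + \lfloor r/2 \rfloor)$, the key analytic input---which drives both parts---is the exact formula, valid because $k$ is odd:
\begin{align*}
T_k(s) = \pm \sin\bigl(k\arcsin(u)\bigr), \qquad u := \frac{s}{2M\sqrt{r}},
\end{align*}
obtained from $\arccos(u) = \pi/2 - \arcsin(u)$ together with $\cos(k\pi/2) = 0$ and $\sin(k\pi/2) = \pm 1$. Everything else is elementary single-variable calculus.

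For part (i), the range $s \in [0, c/k]$ with $c \leq (6/5)M\sqrt{r}$ translates to $u \leq 3/(5k)$, and the convexity bound $\arcsin(u) \leq (\pi/2)u$ on $[0,1]$ gives $k\arcsin(u) \leq 3\pi/10 < \pi/2$. This keeps us in the regime where both of the standard estimates $(2/\pi)|y| \leq |\sin(y)| \leq |y|$ for $|y| \leq \pi/2$ apply. Combined with $u \leq \arcsin(u) \leq (\pi/2) u$, they sandwich the crucial ratio on the whole range:
\begin{align*}
\frac{k}{\pi M\sqrt{r}} \;\leq\; \left|\frac{T_k(s)}{s}\right| \;\leq\; \frac{k}{M\sqrt{r}}.
\end{align*}
Raising to the $m$-th power and integrating in polar coordinates over the ball $|x| \leq c/k$ (with volume element $s^{r-1}\,ds\,d\sigma_{S^{r-1}}$) yields the upper bound immediately, and the lower bound with constant $(1/\pi)^m$, which is at least the claimed $(1/5)^m$.

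For part (ii), the plan is to use only the trivial uniform bound $|T_k(s)| \leq 1$, so that in polar coordinates
\begin{align*}
\int_{t \leq |x| \leq 2M\sqrt{r}} f_k\,dx = \mathrm{vol}(S^{r-1}) \int_t^{2M\sqrt{r}} T_k(s)^m\, s^{r-1-m}\,ds \leq \mathrm{vol}(S^{r-1}) \int_t^{2M\sqrt{r}} s^{r-1-m}\,ds.
\end{align*}
Since $m - r \in \{1,2\}$ depending on the parity of $r$, the last integral is bounded by $t^{r-m}/(m-r) \leq t^{r-m}$, giving the claim with a factor of $2$ to spare.

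Neither part is conceptually deep once the identity $T_k(s) = \pm \sin(k\arcsin(u))$ is in hand; the only thing to watch is the constant-bookkeeping in part (i), and the generous target $(1/5)^m$ leaves ample slack for slight suboptimalities in the elementary trig inequalities. The main (minor) obstacle is verifying that the range $u \leq 3/(5k)$ actually lands inside the favorable regime $k\arcsin(u) \leq \pi/2$, which is what makes $c \leq (6/5)M\sqrt{r}$ precisely the right hypothesis.
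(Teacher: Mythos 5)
Your proof is correct and follows essentially the same route as the paper: pass to polar coordinates, sandwich $|T_k(\rho)/\rho|$ between constant multiples of $k/(M\sqrt{r})$ on the small ball, and use the trivial bound $|T_k|\leq 1$ for the tail. The only difference is in the sub-step establishing the sandwich, where you use the exact identity $\arccos(u)=\pi/2-\arcsin(u)$ together with $u\leq\arcsin(u)\leq(\pi/2)u$ (giving the slightly better constant $(1/\pi)^{2(1+\lfloor r/2\rfloor)}$), while the paper Taylor-expands $\arccos$ about $0$ with an explicit remainder estimate; both land comfortably within the stated $(1/5)^{2(1+\lfloor r/2\rfloor)}$.
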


\begin{proof} The first assertion we obtain by considering the Taylor expansion of $T_k$ about $0$. Suppose that $|y| \leq \frac{6}{5}\frac{M\sqrt{r}}{k}$. We have, by Taylor's Theorem,
\begin{align}
\nonumber arccos(\frac{y}{2M\sqrt{r}}) = \pi/2 - \frac{y}{2M\sqrt{r}} + E(y) \quad \mbox{as} \quad y\rightarrow 0,
\end{align}
where 
\begin{align}
|E(y)| \leq \frac{1}{2} \frac{(\frac{|y|}{2M\sqrt{r}})^3}{{(1 - (\frac{y}{2M\sqrt{r}})^2)}^{3/2}}
\leq \frac{3}{16}\frac{y}{M\sqrt{r}},
\label{eq:error}
\end{align}
where in the last inequality we used the fact that $\frac{|y|}{2M\sqrt{r}} \leq \frac{3}{5}$.
Thus,
\begin{align}
\nonumber T_k(y) = cos(k\;arccos(\frac{y}{2M\sqrt{r}})) = cos(k\pi/2 - \frac{ky}{2M\sqrt{r}} + kE(y))
= \pm sin(\epsilon),
\end{align}
where $\epsilon := \frac{ky}{2M\sqrt{r}} - kE(y)$.
By (\ref{eq:error}),
\begin{align*}
\frac{5}{16}\frac{ky}{M\sqrt{r}}  \leq  \epsilon  \leq \frac{11}{16}\frac{ky}{M\sqrt{r}}.
\end{align*}
In particular, $\epsilon \leq \frac{33}{40}$, and since $\frac{16}{25}\theta \leq sin(\theta) \leq \theta$ for $0 \leq \theta \leq \frac{33}{40}$, it follows that
\begin{align}
\label{eq:estimate2}
\frac{1}{5}\frac{k}{M\sqrt{r}}  \leq   |\frac{T_k(y)}{y}|   \leq \frac{k}{M\sqrt{r}}  \quad \mbox{for} \quad 
|y| \leq \frac{6}{5}\frac{M\sqrt{r}}{k}.
\end{align}
Now changing to polar coordinates, we have 
\begin{align}
\int_{|x| \leq ck^{-1}} {f_kdx} = vol(S^{r-1}) \int_{0}^{ck^{-1}} {(T_k(\rho)/\rho)^{2(1 + \lfloor \frac{r}{2} \rfloor)}\rho^{r - 1}d\rho}.
\label{eq:estimate3}
\end{align}
By (\ref{eq:estimate2}), if $c \leq \frac{6}{5}M\sqrt{r}$, then
\begin{align}
(\frac{k}{5M\sqrt{r}})^{2(1 + \lfloor \frac{r}{2} \rfloor)} \int_{0}^{ck^{-1}} {\rho^{r-1}d\rho} \leq  
\int_{0}^{ck^{-1}} {(T_k(\rho)/\rho)^{2(1 + \lfloor \frac{r}{2} \rfloor)}\rho^{r - 1}d\rho}
\leq   (\frac{k}{M\sqrt{r}})^{2(1 + \lfloor \frac{r}{2} \rfloor)} \int_{0}^{ck^{-1}} {\rho^{r-1}d\rho}.
\label{eq:estimate4}
\end{align}
Since
\[
\int_{0}^{ck^{-1}} \rho^{r-1}dr = \frac{c^rk^{-r}}{r},
\]
(i) follows from (\ref{eq:estimate3}) and (\ref{eq:estimate4}). As for (ii), we note that $|T_k(\rho)| \leq 1$ for all $\rho$, hence
\begin{align*}
\int_{|x| > t} {f_kdx} = vol(S^{r-1}) \int_{t}^{2M\sqrt{r}} {(T_k(\rho)/\rho)^{2(1 + \lfloor \frac{r}{2} \rfloor)}\rho^{r - 1}d\rho} \leq
vol(S^{r-1})\int_{t}^{2M\sqrt{r}} {\rho^{r-1-2(1 + \lfloor \frac{r}{2} \rfloor)}d\rho} 
\end{align*}
\begin{align*}
\leq 2 vol(S^{r-1})t^{r-2(1 + \lfloor \frac{r}{2} \rfloor)},
\end{align*}
where we have used the fact that $(M\sqrt{r})^{r - 2(1 + \lfloor \frac{r}{2} \rfloor)} \leq t^{ r - 2(1 + \lfloor \frac{r}{2} \rfloor)}$ since $r - 2(1 + \lfloor \frac{r}{2} \rfloor) < 0$.
\end{proof}

Now we must construct an antiderivative of $f_k$ satisfying certain useful properties. For this we make use of the following simple lemma.

\begin{lemma} 
\label{antiderivative}
Let $f\in \mathbb{R}[x_1,..., x_r]$ be a polynomial of degree $m$. Then there exists a polynomial $h$ of degree $m + r$ with the following properties:

\noindent (i) $\frac{\partial}{\partial x_1}...\frac{\partial}{\partial x_r}h = f$.

\noindent (ii) $h_J(x) = 0$ for all $J\subsetneq \{1,..., r\}$.
\end{lemma}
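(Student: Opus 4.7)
The natural approach is to construct $h$ as an iterated antiderivative of $f$, integrating each variable from $M$ down to $x_i$. Specifically, I would define
\[
h(x_1,\ldots,x_r) := \int_M^{x_1}\!\!\int_M^{x_2}\!\cdots\int_M^{x_r} f(t_1,\ldots,t_r)\,dt_r\cdots dt_1.
\]
Since $f$ is a polynomial, each successive integration produces another polynomial (each integration in a single variable raises the degree in that variable by exactly one), so $h$ is a polynomial of degree at most $m+r$.

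For property (i), I would apply the Fundamental Theorem of Calculus to each variable in turn: $\partial/\partial x_1$ strips off the outer integral to yield $\int_M^{x_2}\!\cdots\int_M^{x_r} f(x_1,t_2,\ldots,t_r)\,dt_r\cdots dt_2$, and iterating through $x_2,\ldots,x_r$ leaves exactly $f(x_1,\ldots,x_r)$. This verifies $\tfrac{\partial}{\partial x_1}\cdots\tfrac{\partial}{\partial x_r} h = f$.

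For property (ii), I would note that for any proper subset $J\subsetneq\{1,\ldots,r\}$ there exists some $k\notin J$, and by definition of $h_J$ we have $(x_J)_k = M$. Evaluating $h$ at $x_k = M$, the $k$-th iterated integral becomes $\int_M^M(\cdots)\,dt_k = 0$, so the whole expression vanishes and $h_J\equiv 0$.

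None of these steps presents a serious obstacle; the only mild subtlety is merely recognizing that iterating the antiderivative from the boundary value $M$ in every variable simultaneously achieves both the prescribed mixed partial and the vanishing along the ``missing-variable'' faces. The degree bound is immediate, and the final polynomial $h$ satisfies all requirements.
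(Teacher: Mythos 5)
Your proof is correct, and it arrives at the same polynomial as the paper by a more direct route. The paper starts from an \emph{arbitrary} antiderivative $\tilde h$ of degree $m+r$ and then forces property (ii) by an inclusion--exclusion correction, $h := (-1)^r\sum_{J\subset\{1,\dots,r\}}(-1)^{|J|}\tilde h_J$, verifying (ii) via the identity $(\tilde h_J)_I=\tilde h_{I\cap J}$ and a vanishing binomial sum. You instead write down the canonical antiderivative based at $M$ in every variable, $h(x)=\int_M^{x_1}\cdots\int_M^{x_r}f$, for which (i) is the fundamental theorem of calculus (applied variable by variable, with differentiation under the integral sign being harmless since everything is polynomial) and (ii) is immediate because setting $x_k=M$ for some $k\notin J$ degenerates the $k$-th integral to $\int_M^M=0$. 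The two constructions in fact yield the same polynomial: your $h$ equals $(-1)^r\int_{B_x}f\,dX$ with $B_x=\prod_i[x_i,M]$, which is exactly what the paper's alternating sum evaluates to (compare the identity used in Lemma \ref{coefficientbound}); what you gain is that the verification of (ii) becomes a one-line observation rather than a combinatorial computation. One small point of phrasing: each single-variable integration raises the \emph{total} degree by at most one (the antiderivative of a monomial also produces lower-order terms from the constant $M^{a+1}/(a+1)$), so your construction gives $\deg h\leq m+r$; combined with the paper's remark that any solution of (i) has degree at least $m+r$, the degree is exactly $m+r$ as claimed.
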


\begin{proof}
Any polynomial satisfying (i) is automatically of degree $\geq m + r$, so we only need to ensure that $deg(h) \leq m + r$. There clearly exists a polynomial $\tilde{h}$ of degree $m + r$ satisfying (i). Then I claim that 
$h := (-1)^r\sum_{J \subset \{1,..., r\}} {(-1)^{|J|}\tilde{h}_J}$, which is clearly of degree at most $deg(\tilde{h}) \leq 
m + r$, satisfies (i) and (ii). Here, although technically $\tilde{h}_J \in \mathbb{R}[x_i: i \in J]$, we consider 
the $\tilde{h}_J$ as elements of $\mathbb{R}[x_1,..., x_r]$. First, since $\frac{\partial}{\partial x_1}...
\frac{\partial}{\partial x_r} \tilde{h}_J = 0$ for $J \neq \{1,..., r\}$, $h$ satisfies (i). Now for $I$, $J \subset 
\{1,..., r\}$, $(\tilde{h}_J)_I = \tilde{h}_{I\cap J}$. Therefore, 
\begin{align}
h_I = (-1)^r\sum_{J \subset \{1,..., r\}} (-1)^{|J|}h_{I\cap J}.
\label{eq:eq33}
\end{align}
Now if $K \subset I \neq \{1,..., r\}$, then grouping by $|J|$, we see that the coefficient of $h_K$ on the right side of (\ref{eq:eq33}) is
\[
(-1)^r \sum_{i = 0}^{r - |I|} {(-1)^{i + |K|}{r - |I| \choose i}} = (-1)^{r + |K|} (1-1)^{r - |I|} = 0.
\]
Hence $h_I = 0$, so $h$ satisfies (ii).
\end{proof}

Now we may construct the desired antiderivative. Let $v \in [-M, M]^r$. Consider the translation 
$f_v(x) := f_k(x - v)$ of our polynomial $f_k$ from before. It is a polynomial of degree $2(1 + \lfloor \frac{r}{2} \rfloor) 
(k-1)$. Let $h$ be a polynomial as in Lemma \ref{antiderivative} applied to $f = f_v$. Then $h$ has degree $2(1+ \lfloor \frac{r}{2} \rfloor) (k-1) + r$. Therefore we may write
\begin{align}
h = \sum_{\chi} {C_{\chi} \chi},
\end{align}
where $C_{\chi}\in \mathbb{C}$ and the sum is over the (pushforwards of the) irreducible characters $\chi$ appearing in all tensor powers of fundamental representations of degree at most 
$2(1+\lfloor \frac{r}{2}\rfloor)(k-1) + r$. We have the following.

\begin{lemma}
\label{coefficientbound}
$|C_{\chi}| \leq  3 vol(S^{r-1})(M\sqrt{r})^{r - 2(1 + \lfloor \frac{r}{2} \rfloor)}k^{2(1 + \lfloor \frac{r}{2} \rfloor) - r}$.
\end{lemma}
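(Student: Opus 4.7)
The plan is to recover each coefficient $C_\chi$ as an $L^2$ inner product, then bound $h$ pointwise using its description as an iterated antiderivative of $f_v$. First, since $P$ is injective on conjugacy classes (Lemma \ref{homeomorphism}) and the irreducible characters of $G$ are orthonormal in $L^2(G,dg)$ by Schur orthogonality, pulling the identity $h|_{P(G)} = \sum_\chi C_\chi \chi$ back to $G$ via $P$ gives
\begin{equation*}
C_\chi = \int_G h(P(g))\,\overline{\chi(g)}\,dg.
\end{equation*}
Cauchy--Schwarz combined with $\|\chi\|_{L^2(G)} = 1$ and the fact that $dg$ is a probability measure then yields $|C_\chi| \leq \sup_{x\in [-M,M]^r}|h(x)|$.

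To bound $\|h\|_\infty$, I exploit the boundary-vanishing $h_J = 0$ for all $J\subsetneq\{1,\ldots,r\}$ from Lemma \ref{antiderivative}, which forces $h$ to vanish whenever any single coordinate equals $M$. Integrating $\partial_1\cdots\partial_r h = f_v$ by the fundamental theorem of calculus once in each variable (each step picking up a sign because the vanishing is at the upper endpoint) produces
\begin{equation*}
h(x) \;=\; (-1)^r \int_{x_1}^{M}\!\!\cdots\!\int_{x_r}^{M} f_v(t)\,dt_r\cdots dt_1.
\end{equation*}
Since $f_v\geq 0$ and $f_v(t) = f_k(t-v)$, this yields
\begin{equation*}
|h(x)| \;\leq\; \int_{[-M,M]^r} f_v(t)\,dt \;=\; \int_{[-M,M]^r-v} f_k(s)\,ds \;\leq\; \int_{|s|\leq 2M\sqrt{r}} f_k(s)\,ds,
\end{equation*}
the last inequality because $|s_i|\leq 2M$ on the translated box forces $|s|\leq 2M\sqrt{r}$.

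I then split this final integral at radius $ck^{-1}$ with $c := M\sqrt{r}$ (which satisfies $c\leq \tfrac{6}{5}M\sqrt{r}$). Lemma \ref{kernel}(i) bounds the piece over $|s|\leq ck^{-1}$ by $\tfrac{1}{r}\,\mathrm{vol}(S^{r-1})(M\sqrt{r})^{r-2(1+\lfloor r/2\rfloor)}k^{2(1+\lfloor r/2\rfloor)-r}$, while Lemma \ref{kernel}(ii) applied with $t = M\sqrt{r}/k$ bounds the outer piece by $2\,\mathrm{vol}(S^{r-1})(M\sqrt{r})^{r-2(1+\lfloor r/2\rfloor)}k^{2(1+\lfloor r/2\rfloor)-r}$; summing and using $\tfrac1r+2\leq 3$ delivers the claimed bound. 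The main obstacle is the pointwise estimate in the middle step: one has to recognize that the rather combinatorial construction of $h$ in Lemma \ref{antiderivative} is precisely engineered so that $h$ is literally the iterated integral of $f_v$ over the upper corner $\prod_i[x_i,M]$, which is exactly what converts the bound into a total-mass estimate on $f_k$ that Lemma \ref{kernel} is designed to supply. After that, the choice $c=M\sqrt{r}$ is forced by the need to make the $M\sqrt{r}$ powers in (i) and (ii) line up.
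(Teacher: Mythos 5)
Your proposal is correct and follows essentially the same route as the paper: extract $C_\chi$ by character orthonormality plus Cauchy--Schwarz to reduce to $\|h\|_\infty$, identify $h$ (up to sign) with the iterated integral of $f_v$ over $\prod_i[x_i,M]$ using property (ii) of Lemma \ref{antiderivative}, and bound the total mass of $f_k$ by splitting at radius $M\sqrt{r}k^{-1}$ via Lemma \ref{kernel}(i) and (ii). The only differences are cosmetic (order of the steps, and integrating against $dg$ on $G$ rather than against $d\mu$ on $[-M,M]^r$, which is the same by definition of the pushforward).
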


\begin{proof}
First, by Lemma \ref{kernel}, (i) and (ii) (with $t = M\sqrt{r}k^{-1}$), we have
\begin{equation}
\int_{|x-v|\leq 2M\sqrt{r}} {f_k(x-v)} \leq 3 vol(S^{r-1})(M\sqrt{r})^{r - 2(1 + \lfloor \frac{r}{2} \rfloor)}k^{2(1 + \lfloor \frac{r}{2} \rfloor) - r}.  
\label{eq:no7}
\end{equation}
Now for any $x = (x_i)\in [-M, M]^r$, let $B_x := \prod_{i = 1}^r [x_i, M]$. Then by repeatedly applying Fubini's Theorem we obtain
\begin{align*}
(-1)^r\sum_{J\subset \{1,..., r\}} {(-1)^{|J|}h_J(x)} = \int_{B_x} {\frac{\partial}{\partial x_1}...\frac{\partial}{\partial x_r} h dX} = \int_{B_x} {f_k(x-v)dX} 
\end{align*}
\begin{align}
\leq 3  vol(S^{r-1})(M\sqrt{r})^{r - 2(1 + \lfloor \frac{r}{2} \rfloor)}k^{2(1 + \lfloor \frac{r}{2} \rfloor) - r}
\label{eq:no8}
\end{align}
by (\ref{eq:no7}). By property (ii) in Lemma \ref{antiderivative}, every term in the sum on the left side 
of (\ref{eq:no8}) is $0$ except for the $J = \{1,..., r\}$ term. Thus, we obtain
\begin{align}
||h||_{\infty} \leq 3  vol(S^{r-1})(M\sqrt{r})^{r - 2(1 + \lfloor \frac{r}{2} \rfloor)}k^{2(1 + \lfloor \frac{r}{2} \rfloor) - r}.   
\label{eq:no9}
\end{align}
Now we may finish the argument. We have by the orthogonality of characters
\begin{align*}
|C_{\chi}| = |\int {h\bar{\chi}d\mu}| & \leq ||h||_{\infty} \int {|\chi|d\mu} \\
& \leq ||h||_{\infty} (\int {|\chi|^2d\mu})^{1/2} \quad \mbox{(by the Cauchy-Schwarz Inequality)} \\
& = ||h||_{\infty} \quad \mbox{(by the orthonormality of characters)} \\
& \leq 3  vol(S^{r-1})(M\sqrt{r})^{r - 2(1 + \lfloor \frac{r}{2} \rfloor)}k^{2(1 + \lfloor \frac{r}{2} \rfloor) - r}     \quad \mbox{(by (\ref{eq:no9}))}. 
\end{align*}
\end{proof}

We now have all of the tools necessary to complete the proof of Theorem \ref{maintheorem}. First, some notation. Let $D^* := D^*(g) = D^*(P(g))$. Let $R(x) := R^{\mu}_{\{1,..., r\}}(x)$ (Recall our definition of the $R_J$ from before.). As before, given $x = (x_i)\in [-M, M]^r$, let $I_x := \prod_{i = 1}^r [-M, x_i]$. For any measure $\lambda$ on $[-M, M]^r$, define a function $H_\lambda: [-M, M]^r \rightarrow \mathbb{R}$ by $H_\lambda(x) := \lambda(I_x)$. Now we note the following inequality, an immediate consequence of Proposition \ref{pushforward}:
\begin{align}
\label{eq:supnormF}
||F||_\infty \leq \frac{2^{r_2 + \frac{1}{2}(dim(G) - 3r)}}{\pi^r}.
\end{align}
Since $H_\lambda$ is Lipschitz for $\lambda = dX$ with Lipschitz constant $2^{2r-1}M^{r-1}$, and since $\mu = FdX$, it follows from (\ref{eq:supnormF}) that $H := H_\mu$ is Lipschitz with Lipschitz constant 
\begin{align}
\label{eq:lipschitz}
\frac{2^{r_2  - 1 + \frac{1}{2}(dim(G) + r)}M^{r-1}}{\pi^r}.
\end{align}
 Given $x = (x_i)$, $y = (y_i)\in \mathbb{R}^r$, we write $x < y$ if $x_i < y_i$ for all $i$. Now either $R(y) <  -\frac{3}{4}D^*$ for some $y \in [-M, M]^r$, or $R(y) > \frac{3}{4}D^*$ for some $y $. Suppose the former is true. Then I claim that there exists $y \in [-M, M]^r$ such that
\begin{align}
R(y)  <  -\frac{3}{4}D^* \quad \mbox{and} \quad y_i - (-M) \geq \frac{3}{4}\frac{\pi^r  2^{\frac{1}{2}(r - dim(G)) - r_2 + 1}}{M^{r-1}}    D^* \quad \mbox{for all} \;  1\leq i \leq r.    \label{eq:now1}
\end{align}
By assumption there exists $y$ satisfying the first inequality, and I claim that $y$ then automatically satisfies the second inequality. Indeed, if $y_i - (-M) < hD^*$, then $R(y) \geq           -H(I_y) \geq -(2M)^{r-1}hD^*||F||_\infty$, and so choosing $h = \frac{3}{4} \frac{1}{(2M)^{r-1}||F||_\infty}$, we then obtain a contradiction to $R(y) < -\frac{3}{4}D^*$. The claim then follows from (\ref{eq:supnormF}).
Now given $x \in [-M, M]^r$, $x < y$, we have
\begin{align*}
-D^* \leq R(x) \leq -\frac{3}{4} D^* - R(y) + R(x) \leq -\frac{3}{4} D^* + H(y) - H(x)
\end{align*}
\begin{align}
 \leq -\frac{3}{4} D^* + \frac{2^{r_2 - 1 + \frac{1}{2}(dim(G) + r)}M^{r-1}}{\pi^r} |y - x|,
\label{eq:now2}
\end{align}
since $H$ is Lipschitz with Lipschitz constant given by (\ref{eq:lipschitz}).
Together,
(\ref{eq:now1}) and (\ref{eq:now2}) imply that for  
\begin{align}
c := \frac{1}{4} \frac{\pi^r}{M^{r-1}(1 + \sqrt{r})2^{r_2 - 1 + \frac{1}{2}(dim(G) + r)}}, 
\label{eq:constant7}
\end{align}
$v := y - cD^*(1,..., 1)$
 is such that
\begin{align}
x \in [-M, M]^r \quad \mbox{and} \quad -D^* \leq R(x) \leq -\frac{1}{2} D^* \quad \mbox{whenever} \quad |x - v| \leq cD^*.   
\label{eq:12}
\end{align}
In the same way, we obtain a similar such $v$ (with $D^*$ instead of $-D^*$ in (\ref{eq:12})) in the case that $R(y) > \frac{3}{4}D^*$ for some $y$.
Now if $D^* \leq Bk^{-1}$, where 
\begin{align}
B = \frac{40Mr^{3/2}5^r}{c}, 
\end{align}
then Theorem \ref{maintheorem} is trivially true. So let us assume from now on that $D^* >  Bk^{-1}$. Now we consider the functions $f_v$ and $h$ from before. We have for the $c$ in (\ref{eq:constant7}),
\begin{align}
\nonumber & |\int_{[-M, M]^r} {R(x)f_v(x)dX}| = |\int_{|x - v| \leq cD^*} {R(x)f_k(x-v)dX} + \int_{|x-v|>cD^*, x\in [-M, M]^r} {R(x)f_k(x-v)dX}| \\
& \geq 
\frac{1}{2}D^*|\int_{|x - v| \leq cD^*} {f_k(x-v)dX}| - D^*|\int_{|x-v|>cD^*, \; x\in [-M, M]^r} {f_k(x-v)dX}|
\label{eq:13}
\end{align}
by (\ref{eq:12}).
Now since $cD^* > cBk^{-1} > M\sqrt{r}k^{-1}$, Lemma \ref{kernel}, (i) implies that the first integral on the right satisfies
\begin{align}
\label{eq:number21}
\int_{|x - v| \leq cD^*} {f_k(x-v)dX}
\geq (\frac{1}{5})^{2(1 + \lfloor \frac{r}{2} \rfloor)}\frac{k^{2(1 + \lfloor \frac{r}{2} \rfloor) - r} vol(S^{r-1})}{r(M\sqrt{r})^{2(1 + \lfloor \frac{r}{2}\rfloor) - r}}.
\end{align}
On the other hand, by Lemma \ref{kernel}, (ii), the second integral on the right side of (\ref{eq:13}) satisfies
\begin{align}
\label{eq:number27}
\int_{|x-v|>cD^*, \; x\in [-M, M]^r} {f_k(x-v)dX} \leq 2vol(S^{r-1})(cD^*)^{r - 2(1 + \lfloor \frac{r}{2} \rfloor)}
\leq 2vol(S^{r-1})  (cBk^{-1})^{r - 2(1 + \lfloor \frac{r}{2} \rfloor)},
\end{align}
where we've used the fact that $r - 2(1 + \lfloor \frac{r}{2} \rfloor) < 0$.
It follows from the definition of $B$, (\ref{eq:number21}), and (\ref{eq:number27}) that
\begin{align*}
\int_{|x - v| \leq cD^*} {f_k(x-v)dX}
\geq 4  \int_{|x-v|>cD^*, \; x\in [-M, M]^r} {f_k(x-v)dX}.
\end{align*}
Thus, by (\ref{eq:13}),
\begin{align*}
 |\int_{[-M, M]^r} {R(x)f_v(x)dX}| \geq \frac{1}{4}D^* \int_{|x - v| \leq cD^*} {f_k(x-v)dX}.
 \end{align*}
It now follows from (\ref{eq:number21}) that
\begin{align}
\label{eq:final11}
 |\int_{[-M, M]^r} {R(x)f_v(x)dX}| \geq 
 \frac{1}{4} (\frac{1}{5})^{2(1 + \lfloor \frac{r}{2} \rfloor)}\frac{k^{2(1 + \lfloor \frac{r}{2} \rfloor) - r} vol(S^{r-1})}{r(M\sqrt{r})^{2(1 + \lfloor \frac{r}{2}\rfloor) - r}}   D^*
 \end{align}
On the other hand, we can bound the integral on the left in terms of the moments of irreducible representations. Indeed, we have
\begin{align}
(-1)^r\int_{[-M, M]^r} {R(x)f_v(x)dX} = \sum_{\phi \neq J\subset \{1,..., r\}} (-1)^{|J|}\int_{\prod_{k\in J} {[-M, M]}} {R_J(x)\frac{\partial h_J}{\partial x_J}dX_J},   
\label{eq:15}
\end{align}
since $h$ satisfies (i) and (ii) of Lemma \ref{antiderivative} with $f = f_v$. Now the right side of (\ref{eq:15}) equals
\[
\sum_{\chi}C_{\chi} {\sum_{\phi \neq J\subset \{1,..., r\}} (-1)^{|J|}\int_{\prod_{k\in J} {[-M, M]}} {R_J(x)\frac{\partial \chi_J}{\partial x_J}dX_J}},
\]
where $|C_{\chi}| \leq 3vol(S^{r-1})(M\sqrt{r})^{r - 2(1 + \lfloor \frac{r}{2} \rfloor)} k ^{2(1 + \lfloor \frac{r}{2} \rfloor) - r}$ by Lemma \ref{coefficientbound}, and the sum is over the irreducible characters appearing in all tensor powers of fundamental representations of degree at most $2(1 + \lfloor \frac{r}{2} \rfloor)(k - 1) + r$. By Corollary \ref{corollary} this equals
\[
\sum_{\chi} {C_{\chi} \left[ \frac{1}{N} \sum_{i = 1}^N {\chi(g_i)} \right]},
\]
where the sum is over the same characters $\chi$, but with the trivial one removed,
since $\int {\chi d\mu} = 0$ for the nontrivial $\chi$, while for the trivial character $\chi_0$, $\int {\chi_0 d\mu} - \frac{1}{N} \sum_{i = 1}^N {\chi_0(g_i)} = 1 - 1 = 0$. In absolute value this is
\[
\leq 3vol(S^{r-1})(M\sqrt{r})^{r - 2(1 + \lfloor \frac{r}{2})} k ^{2(1 + \lfloor \frac{r}{2} \rfloor) - r} \sum_{\chi} \left| \frac{1}{N} \sum_{i = 1}^N \chi(g_i) \right|.
\]
Putting everything together we obtain
\begin{align}
|\int_{[-M, M]^r} {R(x)f_v(x)dX}| \leq 3vol(S^{r-1})(M\sqrt{r})^{r - 2(1 + \lfloor \frac{r}{2} \rfloor)}  k ^{2(1 + \lfloor \frac{r}{2} \rfloor) - r} \sum_{\chi} \left| \frac{1}{N} \sum_{i = 1}^N {\chi(g_i)}\right|.   \label{eq:34}
\end{align}
(\ref{eq:final11}) and (\ref{eq:34}) together imply that (Recall the case $D^* \leq Bk^{-1}$.)
\begin{align}
\label{eq:final2}
D^* \leq max\left \{ 300r5^r, \;    \frac{160r^2(5M)^r2^{r_2 + \frac{1}{2}(r + dim(G))}}{\pi^r } \right \} \left(\frac{1}{k} +  \sum_{\chi} |{\frac{1}{N} \sum_{i = 1}^N {\chi(g_i)}|} \right).
\end{align}
Since $M = \displaystyle \max_{\Gamma} dim(\Gamma)$, where the max is taken over fundamental representations $\Gamma$, Theorem \ref{maintheorem} now follows from the above inequality, inequality (\ref{eq:ineq3}), and the fact that $dim(G) \geq 3r$ (since $dim(G) \geq r + 2(\mbox{number of positive roots}) \geq  r + 2(\mbox{number of simple roots}) = 3r$), after we double the constant to get the $C_G$ in the theorem, to account for the fact that we've only considered odd $k$.  \hfill $\blacksquare$

\vspace{0.3in}

\noindent {\large \textbf{Acknowledgements}} I'd like to thank Nick Katz for pointing out Niederreiter's paper to me, and for many helpful conversations and suggestions.

\newpage

\begin{center}
{\LARGE References}
\end{center}

\noindent [B] N. Bourbaki. \emph{Lie Groups and Lie Algebras, Chapters 7-9}. Translated by Andrew Pressley. Springer, 2005.

\noindent [D-T]  Michael Drmota and Robert Tichy. \emph{Sequences, Discrepancies, and Applications}. Springer, 1997.

\noindent [N] Harald Niederreiter. \emph{The distribution of values of Kloosterman sums}. Arch. Math, Vol. 56, 270-277 (1991).

\end{document}